\author{%
Jan Corsten\thanks{London School of Economics, Houghton St, London WC2A 2AE, UK, \href{mailto:j.corsten@lse.ac.uk}{j.corsten@lse.ac.uk}. Supported by an LSE studentship.}%
\and Walner Mendon\c{c}a\thanks{University of Warwick, Mathematics Institute, Coventry CV4 7AL, UK, \href{mailto:walner@impa.br}{walner@impa.br}. Supported by CAPES project 88882.332408/2010-01 and by the UK Research and Innovation Future Leaders Fellowship MR/S016325/1}
}%
\date{}
\title{\LARGE{Tiling edge-coloured graphs with few monochromatic bounded-degree graphs}}
\crefname{chapter}{Chapter}{Chapters}
\crefname{section}{Section}{Sections}
\crefname{subsection}{Section}{Sections}
\crefname{subsubsection}{Section}{Sections}
\crefname{figure}{Figure}{Figures}
\crefname{table}{Table}{Tables}
\crefname{conj}{Conjecture}{Conjectures}
\crefname{claim}{Claim}{Claims}
\crefname{equation}{}{}
\crefname{enumi}{}{}
\def\claimqed{\scalebox{.6}{$\Box$}}
\newenvironment{claimproof}[1][Proof]{
  \renewcommand{\qedsymbol}{\claimqed}
  \begin{proof}[#1]
  }{
  \end{proof}
}
\theoremstyle{definition}
\providecommand{\customgenericname}{}
\newcommand{\newcustomdefinition}[2]{%
  \newenvironment{#1}[1]
  {%
   \renewcommand\customgenericname{#2}%
   \renewcommand\theinnercustomgeneric{##1}%
   \innercustomgeneric{}
  }
  {\endinnercustomgeneric}
}
\numberwithin{equation}{section}
\theoremstyle{definition}
\newtheorem{question}{Question}
\newtheorem{definition}{Definition}[section]
\newtheorem{example}[definition]{Example}
\theoremstyle{plain}
\newtheorem{thm}[definition]{Theorem}
\newtheorem{conj}{Conjecture}[section]
\newtheorem{lemma}[definition]{Lemma}
\newtheorem{cor}[definition]{Corollary}
\newtheorem{prop}[definition]{Proposition}
\newtheorem{claim}{Claim}[subsection]
\theoremstyle{remark}
\newcommand{\Pro}[1]{\mathbb{P} \left[#1\right]}
\newcommand{\expb}[1]{\exp\left(#1\right)}
\newcommand{\bN}{\mathbb{N}}
\newcommand{\e}{\varepsilon}
\newcommand{\eps}{\varepsilon}
\let\epsilon=\varepsilon{}
\newcommand{\cF}{\mathcal{F}}
\newcommand{\cH}{\mathcal{H}}
\newcommand{\cI}{\mathcal{I}}
\newcommand{\cK}{\mathcal{K}}
\newcommand{\cT}{\mathcal{T}}
\newcommand{\card}[1]{\left| #1 \right|}
\renewcommand{\subset}{\subseteq}
\DeclareMathOperator{\dd}{d}
\begin{document}
\maketitle
\begin{abstract}
We prove that for all integers $\Delta,r \geq 2$, there is a constant $C = C(\Delta,r) >0$ such that the following is true for every sequence $\cF = \{F_1, F_2, \ldots\}$ of graphs with $v(F_n) = n$ and $\Delta(F_n) \leq \Delta$, for each $n \in \bN$. In every $r$-edge-coloured $K_n$, there is a collection of at most $C$ monochromatic copies from $\cF$ whose vertex-sets partition $V(K_n)$. This makes progress on a conjecture of Grinshpun and S\'ark\"ozy.

\par\vskip\baselineskip\noindent
\scriptsize{\textbf{2020 Mathematics Subject Classification: 05C55 (primary); 05C70 (secondary).}}
\end{abstract}

\section{Introduction and main results}

A conjecture of Lehel states that the vertices of any $2$-edge-coloured complete graph can be
partitioned into two monochromatic cycles of different colours. Here, single vertices and edges are
considered cycles. This conjecture first appeared in~\cite{Ayel1979}, where it was also proved for some
special types of colourings of $K_n$. \L{}uczak, R\"odl and Szemer\'edi~\cite{Luczak1998} proved Lehel's
conjecture for sufficiently large $n$ using the regularity method. Allen~\cite{Allen2008} gave an
alternative proof, with a better bound on $n$. Finally, Bessy and Thomass\'e~\cite{Bessy2010} proved
Lehel's conjecture for all integers $n\ge 1$.

For colourings with more colours, Erd\H os, Gy\'arf\'as and Pyber~\cite{Erdos1991} proved that the
vertices of every $r$-edge-coloured complete graph on $n$ vertices can be partitioned into $O(r^2
\log r)$ monochromatic cycles. They further conjectured that $r$ cycles should be enough. The
currently best-known upper bound is due to Gy\'arf\'as, Ruszink\'o, S\'ark\"ozy and
Szemer\'edi~\cite{Gyarfas2006}, who showed that $O(r \log r)$ cycles suffice. However, the conjecture was
refuted by Pokrovskiy~\cite{Pokrovskiy2014}, who showed that, for every $r \geq 3$, there exist infinitely
many $r$-edge-coloured complete graphs which cannot be vertex-partitioned into $r$ monochromatic
cycles. Nevertheless, Pokrovskiy conjectured that in every $r$-edge-coloured complete graph one can
find $r$ vertex-disjoint monochromatic cycles which cover all but at most $c_r$ vertices for some
$c_r \geq 1$ only depending on $r$ (in his counterexample $c_r = 1$ is possible).

In this paper, we study similar problems in which we are given a family of graphs $\cF$ and an
edge-coloured complete graph $K_n$ and our goal is to partition $V(K_n)$ into monochromatic copies
of graphs from $\cF$. All families of graphs $\cF$ we consider here are of the form $\cF = \{F_1,
F_2, \ldots\}$, where $F_i$ is a graph on $i$ vertices for every $ i \in \bN$. We call such a family
a \emph{sequence of graphs}. A collection $\cH$ of vertex-disjoint subgraphs of a graph $G$ is an
\emph{$\cF$-tiling} of $G$ if $\cH$ consists of copies of graphs from $\cF$ with $V(G) = \bigcup_{H
  \in \cH} V(H)$. If $G$ is edge-coloured, we say that $\cH$ is \emph{monochromatic} if every $H \in
\cH$ is monochromatic. Let $\tau_r(\cF,n)$ be the minimum $ t \in \bN$ such that for every
$r$-edge-coloured $K_n$, there is a monochromatic $\cF$-tiling of size at most $t$. We define the
\emph{tiling number} of $\cF$ as
\[
  \tau_r(\cF) = \sup_{n \in \bN} \tau_r(\cF,n).
\]

Using this notation, the results of Pokrovskiy~\cite{Pokrovskiy2014} and of Gy\'arf\'as, Ruszink\'o,
S\'ark\"ozy and Szemer\'edi~\cite{Gyarfas2006} mentioned above imply that $r + 1 \leq
\tau_r(\cF_{\mathrm{cycles}}) = O(r \log r)$, where $\cF_{\mathrm{cycles}}$ is the family of cycles.
Note that, in general, it is not clear at all that $\tau_r(\cF)$ is finite and it is a natural
question to ask for which families this is the case.

The study of such tiling problems for general families of graphs was initiated by Grinshpun and
S\'ark\"ozy~\cite{Grinshpun2016}. The \emph{maximum degree} $\Delta(\cF)$ of a sequence of graphs
$\cF$ is given by $\sup_{F \in \cF} \Delta(F)$, where $\Delta(F)$ is the maximum degree of $F$. We
denote by ${\cF}_{\Delta}$ the collection of all sequences of graphs $\cF$ with $\Delta(\cF) \leq
\Delta$. Grinshpun and S\'ark\"ozy proved that \(\tau_2(\cF) \leq 2^{O(\Delta \log
  \Delta)}\) for all $\cF \in {\cF}_{\Delta}$. In particular, $\tau_2(\cF)$ is finite whenever
$\Delta(\cF)$ is finite. They also proved that $\tau_2(\cF) \leq 2^{O(\Delta)}$ for every sequence
of bipartite graphs $\cF$ of maximum degree at most $\Delta$, and showed that this is best possible
up to a constant factor in the exponent (see also~\cref{sec:concluding-remarks} for a more detailed discussion on the lower bound).

S\'ark\"ozy~\cite{Sarkoezy2017} further proved that $\tau_2(\cF_{\text{$k$-cycles}}) = O(k^2\log{k})$,
where $\cF_{\text{$k$-cycles}}$ denotes the family of $k$th power of cycles\footnote{The $k$-th
  power of a graph $H$ is the graph obtained from $H$ by adding an edge between any two vertices at
  distance at most $k$}. For more than two colours less is known. Answering a question of Elekes,
Soukup, Soukup and Szentmikl\'ossy~\cite{Elekes2017}, Bustamante, Frankl, Pokrovskiy, Skokan and the
first author~\cite{Bustamante2019} proved that $\tau_r(\cF_{\text{$k$-cycles}})$ is finite for all
$r,k \in \bN$. Grinshpun and S\'ark\"ozy~\cite{Grinshpun2016} conjectured that the same should be
true for all families of graphs of bounded degree with an exponential bound.

\begin{conj}[Grinshpun-S\'ark\"ozy~\cite{Grinshpun2016}, 2016]\label{conj:main}
  For every $r,\Delta \in \bN$ and $\mathcal{F}\in\mathcal{F}_{\Delta}$, $\tau_r(\cF)$ is finite.
  Moreover, there is some $C_r >0 $ such that $ \tau_r(\cF) \leq \exp(\Delta^{C_r})$.
\end{conj}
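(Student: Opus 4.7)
The plan is to combine the multicolour Szemerédi regularity lemma with the Komlós--Sárközy--Szemerédi blow-up lemma and an absorption argument, following the template developed in~\cite{Bustamante2019} for the cycle case. Fix $r$ and $\Delta$, pick $\eps \ll 1/\Delta$, and let $\cF \in \cF_{\Delta}$. Given an $r$-edge-coloured $K_n$ with $n$ large, apply the $r$-colour regularity lemma to obtain an equipartition $V(K_n) = V_0 \cup V_1 \cup \cdots \cup V_k$ with $\card{V_0} \leq \eps n$ and $k = k(\eps,r)$ bounded, in which all but at most $\eps \binom{k}{2}$ pairs $(V_i,V_j)$ are $\eps$-regular in each colour. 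For every regular pair, record the colour carrying the most edges; this produces an auxiliary $r$-edge-coloured almost-complete reduced graph $R$ on $[k]$.

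The core combinatorial step is then to find in $R$ a partition of $[k]$ into a bounded number $C_0 = C_0(r,\Delta)$ of monochromatic ``robust'' pieces. Each piece $P_i$ should span a connected subgraph in a single colour $c_i$ that is rich enough for a blow-up argument, e.g.\ a matching lying inside a monochromatic connected subgraph, or, better, a Hamiltonian-connected monochromatic subgraph. The $r$-colour analogue of Gyárfás' theorem on large monochromatic components gives a starting point: extract a large monochromatic connected piece, remove it, and iterate; the delicate part is showing that the iteration terminates after a bounded number of steps while simultaneously arranging each piece so that its blow-up is tileable.

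Once such a structural decomposition is in hand, each piece $P_i$ corresponds to a cluster union $W_i = \bigcup_{j \in P_i} V_j$ that, in colour $c_i$, is a regular blow-up of $P_i$. The Komlós--Sárközy--Szemerédi blow-up lemma then allows embedding \emph{any} graph of maximum degree at most $\Delta$ on $\card{W_i}$ vertices monochromatically into $W_i$; in particular, the graph $F_{\card{W_i}} \in \cF$ of the correct size. Before doing so, I would reserve inside one fixed piece a small absorbing gadget -- a carefully chosen bounded-degree monochromatic subgraph that can swap in an arbitrary extra vertex -- so that the exceptional vertices in $V_0$, together with any small surplus arising from size adjustments between clusters, can be incorporated into the corresponding copy of $F_m$ without ever exceeding degree $\Delta$. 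This yields a monochromatic $\cF$-tiling of $V(K_n)$ of size $C_0$, giving finiteness of $\tau_r(\cF)$.

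The principal obstacle is the structural step in $R$. For $r = 2$ one has Lehel-type decompositions, and for powers of cycles one has the specialised techniques of~\cite{Sarkoezy2017,Bustamante2019}, but for general $\cF \in \cF_{\Delta}$ and $r \geq 3$ neither immediately applies: the graph one is embedding need not admit a convenient Hamiltonian structure, and the absorber itself must look like an induced bounded-degree subgraph from $\cF$, which restricts how it can be built. This is what forces the constant $C(\Delta,r)$ produced by the method to depend badly on $\Delta$; obtaining the bound $\exp(\Delta^{C_r})$ predicted in the conjecture would additionally require a quantitative strengthening of both the structural decomposition of $R$ and the absorption step, and appears out of reach of a pure regularity-plus-blow-up approach.
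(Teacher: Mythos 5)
Your proposal correctly identifies the central obstruction in the regularity-plus-blow-up template, but the gap you flag is precisely the step the proposal does not fill, so as it stands this is an outline of difficulties rather than a proof. You also sketch the strategy for the full conjectured bound $\exp(\Delta^{C_r})$, whereas the paper only establishes finiteness, with a double-exponential bound in $\Delta$; neither the paper nor your sketch achieves the exponential bound.

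The paper takes a genuinely different route, and it is worth seeing why. You propose applying Szemer\'edi's regularity lemma, building an $r$-coloured reduced graph $R$, and then hunting for a decomposition of $V(R)$ into a bounded number of monochromatic pieces that are simultaneously (a) structurally rich enough to support a blow-up and (b) compatible with the balance constraints an $\cF$-tiling needs. This is the step you concede you cannot do, and rightly so: for $r \ge 3$ there is no known analogue of Lehel/Gy\'arf\'as-type decompositions with the required balancedness, and coercing the structure of $R$ to respect the sizes of the clusters is exactly where iterations threaten to blow up unboundedly. The paper sidesteps the reduced-graph structural question entirely. Instead of decomposing $R$, it extracts a handful of monochromatic \emph{super-regular $(\Delta+2)$-cylinders} directly (via the Duke--Lefmann--R\"odl weak regularity lemma, not Szemer\'edi's, which is why the constants are only doubly and not tower-type exponential), then covers almost everything outside them greedily using the Fox--Sudakov linear Ramsey bound, and finally feeds the leftover vertices into the cylinders through an Absorption Lemma whose proof is a \emph{density increment argument}: if a vertex sees a $\delta$-fraction of monochromatic transversal $k$-cliques, either it can be absorbed into a cylinder of the right colour, or it sees a strictly larger ($\delta + \eta$) fraction of cliques with respect to a smaller target, and one recurses; termination follows because density cannot exceed one. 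Iterating the whole construction terminates after $R_r(K_{\Delta+2})$ rounds via \cref{lem:indtr} plus Ramsey's theorem. The super-regular cylinders play the role of your ``absorbing gadget,'' and the fact that each cylinder can be partitioned into $\Delta+3$ copies from $\cF$, and can swallow a small set with one extra copy, comes from the blow-up lemma plus Hajnal--Szemer\'edi (\cref{thm:blowup}); crucially this does not require the absorber itself to be a single copy of a graph from $\cF$, which is the constraint that stalls your version of absorption. The price of this route is the recursion depth in the density increment, which is where the second exponential in \cref{thm:main} comes from; the conjectured single-exponential bound remains open.
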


Our main theorem shows that $\tau_r(\cF)$ is indeed finite. For a given positive integer $k$, we
denote by $\exp^k$ the $k$th-composition of the exponential function.

\begin{thm}\label{thm:main}
  There is an absolute constant $K > 0$ such that for all integers $r,\Delta \geq 2$ and all $\cF
  \in {\cF}_{\Delta}$, we have
  \[
    \tau_r(\cF) \leq \exp^2\left(r^{K r \Delta^3} \right).
  \]
  In particular, $\tau_r(\cF)$ is finite whenever $\Delta(\cF)$ is finite.
\end{thm}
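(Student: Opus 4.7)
The plan is to combine the Regularity Lemma, a Ramsey-type structural analysis of the reduced graph, and the Blow-up Lemma for bounded-degree graphs, finishing with an absorption argument to clean up leftovers. Since every $F_m \in \cF$ satisfies $\Delta(F_m) \leq \Delta$, the Blow-up Lemma of Koml\'os--S\'ark\"ozy--Szemer\'edi embeds $F_m$ as a spanning monochromatic subgraph of any host vertex set admitting a sufficiently super-regular monochromatic $(\Delta+1)$-partite skeleton, so the problem reduces to covering $V(K_n)$ by few vertex-disjoint monochromatic hosts of this form.

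First I would apply the $r$-coloured Regularity Lemma to $K_n$ with parameters $\eps, d$ chosen small in terms of $r, \Delta$, obtaining a partition $V_0, V_1, \ldots, V_M$ with $|V_0| \leq \eps n$ and equal-size clusters $V_i$ for which most pairs $(V_i, V_j)$ are $\eps$-regular in every colour. Form the reduced $r$-edge-coloured (almost-)complete graph $R$ on $[M]$ by assigning to each such pair a colour of density at least $d$. The main combinatorial step happens in $R$: using iterated multicolour Ramsey arguments (via $R(K_t; r) \leq r^{rt}$), I would partition $V(R)$ into at most $\exp(r^{O(r\Delta^3)})$ monochromatic ``host cliques'' of size $\geq \Delta+1$ (or more generally, few monochromatic subgraphs that support spanning blow-ups). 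Lifting each monochromatic host via the Blow-up Lemma produces a monochromatic $\cF$-tile in $K_n$ of the corresponding size, yielding an almost-perfect $\cF$-tiling of $V(K_n)\setminus V_0'$ with $V_0'$ of sub-linear size.

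The leftover $V_0'$ is then handled by an absorption step: before the main tiling, reserve inside each monochromatic host a small gadget capable of absorbing any bounded-size leftover while preserving monochromaticity and the bounded-degree property; any remaining constant-size residual is tiled by singleton copies of $F_1 \in \cF$. The double-exponential bound $\exp^2(r^{Kr\Delta^3})$ then reflects the partition size $M$ (exponential in a polynomial of $1/\eps$) nested with the choice $1/\eps = r^{O(r\Delta^3)}$ needed to support the Ramsey-structural step on $R$; the two exponentials compose to give $\exp^2$. The main obstacle, in my view, is engineering the structural step on $R$ so that it yields a bounded family of monochromatic hosts that \emph{simultaneously} support the Blow-up Lemma (requiring cliques of size $\geq \Delta+1$) and admit reserved absorbers with extra room, while keeping the family size bounded in terms of $r$ and $\Delta$ only; the trade-off between these constraints and multicolour Ramsey economics is what likely accounts for the cubic dependency on $\Delta$ in the exponent. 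A secondary subtlety is coordinating the absorbers across hosts so that arbitrarily coloured leftover vertices can always find a compatible destination without cascading error terms across iterations.
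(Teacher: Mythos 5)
Your plan to apply Szemer\'edi's regularity lemma to $K_n$ and then partition the reduced graph $R$ on $M$ clusters into monochromatic ``host cliques'' of size at least $\Delta+1$ cannot give a bound on the number of tiles that depends only on $r$ and $\Delta$. If the host cliques have bounded size, you need $\Omega(M/\Delta)$ of them, and $M$ is a (huge, $\eps$-dependent) function of $n$. If you allow unbounded size, each host corresponds to arbitrarily many clusters and still needs $\Omega(s/(\Delta+2))$ tiles for $s$ clusters (the blow-up lemma embeds one $(\Delta+2)$-partite copy of $F$ at a time, not a spanning copy across $s$ clusters with uniform constants, and the blow-up constants degrade with the number of parts). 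The paper never applies the global regularity lemma: instead it combines Fox--Sudakov's bounded-degree Ramsey theorem to \emph{greedily} cover all but a tiny fraction of $V(K_n)$ with at most $O(r^{2rk\Delta}\log(1/\gamma))$ monochromatic tiles (\cref{prop:greedy}), with the \emph{weak} regularity lemma of Duke--Lefmann--R\"odl applied locally to extract a bounded number ($N=r^{rk}$) of super-regular cylinders (\cref{lemma:regcyl}). This is the mechanism that keeps the tile count bounded; your approach has no analogue.

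The second, deeper gap is the absorption step, which you wave at with ``reserve a gadget able to absorb any bounded-size leftover.'' The difficulty is that leftover vertices may not have, in any single colour, enough monochromatic transversal $k$-cliques into the cylinders you reserved; without such cliques, the vertex simply cannot be attached to the cylinder. The paper's central new contribution is the Absorption Lemma (\cref{lem:abs}), whose hypothesis is exactly that every vertex of $V_1$ lies in $\geq d|V_2|\cdots|V_k|$ monochromatic transversal cliques, and whose proof proceeds by a \emph{density increment argument}: a vertex that cannot be absorbed at density $d$ can be shown to lie in cliques of density $d+\eta$ in a refined configuration, and iterating this bounds the recursion depth by $\exp((r/d)^{O(\Delta)})$. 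Then the top-level iteration (the ``Framework'' in \cref{subsec:main}) builds $N=r^{rk}\ge R_r(K_k)$ cylinders and shows via \cref{lem:indtr} that after $N$ rounds no ``bad'' vertex can remain, since otherwise one could pick one vertex from each round's bad set to obtain an $r$-coloured $K_{N+1}$ with no monochromatic $K_k$, contradicting Ramsey. Your proposal does not address how to ensure leftovers have a compatible colour toward some absorber, nor why the whole scheme terminates after a bounded number of rounds; both of these are essential, and both are where the paper's new ideas live.
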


In order to prove \cref{thm:main}, we shall prove an absorption lemma (see \cref{lem:abs}) whose proof relies on a \emph{density increment argument}. This is responsible for the double
exponential bound in our main theorem.

The paper is organized as follows. In \cref{sec:overview}, we present an overview of the proof
of our main theorem and the proof of our absorption lemma. In \cref{sec:reg} we collect a few lemmas regarding regular pairs and regular cylinders that we shall use repeatedly in later sections.
The proof of our absorption lemma and main theorem can be found in \cref{subsec:abs} and \cref{subsec:main}, respectively. Finally, we finish the paper with some concluding remarks in
\cref{sec:concluding-remarks}.

\section{Proof overview}\label{sec:overview}
The proof of \cref{thm:main}, similarly to the proof of the two colour result of Grinshpun and Sárközy~\cite{Grinshpun2016}, combines ideas from the absorption method as in the original paper of Erd\H os, Gy\'arf\'as and Pyber~\cite{Erdos1991} with some modern approaches involving the blow-up lemma and the weak regularity lemma of Duke, Lefmann and R\"odl~\cite{Duke1995}. However, in order to extend these ideas to more colours, we need to prove a significantly more complicated \emph{absorption lemma}, requiring new ideas involving a density increment argument.

Our absorption lemma (\cref{lem:abs}) states that if we have
$k:=\Delta+2$ disjoint sets of vertices $V_1,\ldots,V_k$ with $|V_i| \geq 2|V_1|$ for all $i=2,\ldots,k$ such that every vertex in $V_1$ belongs to at least $\delta |V_2|\cdots |V_k|$ monochromatic $k$-cliques \emph{transversal}\footnote{A $k$-clique is transversal in $(V_1,\ldots,V_k)$ if it contains one vertex in each one of the sets $V_1,\ldots,V_k$.}
in $(V_1,\ldots,V_k)$, then it is possible to cover the vertices in $V_1$ with a constant number (depending on $\delta$, $r$ and $\Delta$) of
monochromatic vertex disjoint copies of graphs from $\mathcal{F}$. Furthermore, we can choose such a covering using no more than $|V_1|$ vertices in each $V_2,\ldots,V_k$.

To deduce \cref{thm:main} from the absorption lemma, we need to partition $V(K_n)$ in a similar fashion as in~\cite{Bustamante2019}: first we find $k-1$ monochromatic
\emph{super-regular cylinders} $Z_1,\ldots,Z_{k-1}$ covering a positive proportion of the vertices
of $K_n$ (see \cref{sec:reg} for the definition of super-regular cylinders). Then we apply a result of Fox and
Sudakov~\cite{Fox2009} to \emph{greedily} cover with few disjoint monochromatic copies of
graphs from $\mathcal{F}$ almost all of the vertices in
$V(K_n) \setminus (Z_1 \cup \cdots \cup Z_{k-1})$, leaving uncovered a set $R$ of size much smaller than $|Z_{k-1}|$ (see~\cref{prop:greedy}).

Now we split $R$ into two sets: the set $R_1$ of vertices belonging to at least
$\delta |Z_1| \cdots |Z_{k-1}|$ monochromatic $k$-cliques transversal in $(R,Z_1,\ldots,Z_{k-1})$,
and the set $R_2 = R\setminus R_1$. Using our absorption lemma we can cover the vertices in $R_1$ using
no more than $|R_1|$ vertices of each of the cylinders $Z_1,\ldots,Z_{k-1}$. For each
$i=1,\ldots,k-1$, let $Z'_i$ be the set of vertices in $Z_i$ that has not been used to cover $R_1$.
Since we $|R_1|$ is significantly smaller than $|Z_i|$, it follows that each $Z'_i$ is still a super-regular cylinder.
Now, if the set $R_2$ was empty, then we would be done. Indeed, a consequence of the blow-up lemma (\cref{thm:blowup}) guarantees that we can cover each of the cylinders $Z'_1,\ldots,Z'_{k-1}$ with $k+1$ copies of vertex disjoint monochromatic graphs from $\mathcal{F}$.

So let us consider the case where $R_2$ is non-empty.
In this case, we repeat the process above.
This time we first find a reasonably large regular cylinder $Z_k$ in $R_2$, then we greedily cover most of the vertices in $R_2\setminus Z_k$ and apply the absorption lemma to those vertices that have not yet been covered and belong to many monochromatic $k$-cliques transversal in $R_2$ and $k-1$ of the cylinders $Z'_1,\ldots,Z'_{k-1},Z_k$.
The set of leftover vertices, which we denote by $R_3$, is either empty (and in this case we are done, as above) or is non-empty, in which case we repeat the process to cover $R_3$.
Finally, using a lemma from~\cite{Bustamante2019} (see~\cref{lem:indtr}) and Ramsey's theorem, we can show that this process must stop after $R_r(K_k)$ many iterations, where $R_r(K_k)$ denotes the $r$-colour Ramsey number of the graph $K_k$.

In order to prove the absorption lemma, we employ a density increment argument.
This is the most difficult part of the proof and the key new idea in this paper.
First, we partition $V_1$ into $r$ sets $V_1^{(1)}, \ldots, V_1^{(r)}$ so that for every $j \in [r]$, every $v \in V_i^{(j)}$ is incident to at least $d/r \cdot |V_2|\cdots|V_k|$ monochromatic cliques of colour $j$ which are transversal in $(V_1, \ldots, V_k)$. We will cover each of these sets separately, making sure not to repeat vertices.
Let us illustrate how to cover $V_1^{(1)}$.

We start by finding a large $k$-cylinder $Z=(U_1,\ldots,U_k)$ with $U_1 \subset V_1^{(1)}, U_2 \subset V_2, \ldots U_k \subset V_k$ which is super-regular in colour $1$.
We shall use $Z$ as an \emph{absorber} at the end of the proof to cover any small set of leftovers.
Next, we greedily cover most of $V_1^{(1)} \setminus U_1$ by monochromatic copies of $\cF$ until the set of uncovered vertices $R$ has size much smaller then $|U_1|$.
To cover the set $R$, we will find a partition
$R = S \cup T_2 \cup \ldots \cup T_k$,
where each vertex in $S$ belongs to many monochromatic $k$-cliques of colour $1$ which are transversal in $(S,U_2,\ldots,U_k)$ (allowing $S$ to be absorbed into the cylinder $Z$ at the end of the proof)
and each vertex in $T_i$, for $i \in \{2,\ldots,k\}$, belongs to at least $(\delta+\eta)
|V_2|\cdots|V_{i-1}| |U_i| \cdots |U_k|$ monochromatic $k$-cliques transversal in
$(T_i,V_2,\ldots,V_i,U_{i+1},\ldots,U_k)$, for some $\eta \ll \delta$.

To cover the vertices in each $T_i$, with $i \in \{2,\ldots,k\}$, we repeat the argument with
$(V_1,\ldots,V_k)$ replaced by $(T_i,V_2,\ldots,V_i,U_{i+1},\ldots,U_k)$ and $\delta$ replaced by $\delta+\eta$.
This is our density increment argument.
Since every time we repeat the argument we significantly increase the density of $k$-cliques, we can bound the number of required repetitions in terms of the initial density of $k$-cliques. 

While covering each of the sets $T_2,\ldots,T_k$, we shall guarantee that the set of vertices $X_i \subseteq U_i$ that we use to cover them has size much smaller than $|U_i|$ for all $i=2,\ldots,k$. This way, the cylinder $Z' = (U_1 \cup S,U_2\setminus X_2,\ldots,U_k \setminus X_k)$ will be super-regular in colour $1$ and thus we can cover $Z'$ using the blow-up lemma.
Repeating this for every colour $j\in [r]$, we get a covering of $V_1$ with $O_{\delta,r,\Delta}(1)$ many monochromatic disjoint copies of graphs from $\mathcal{F}$.

\section{Regularity}\label{sec:reg}
In this section, we will gather all the notations and results related to the classical regularity
technique which we require for the proof. We start by introducing some relevant notations. Let $G =
(V_1,V_2,E)$ be a bipartite graph with parts $V_1$ and $V_2$. For any $U_i \subseteq V_i$, $i=1,2$,
the density of the pair $(U_1,U_2)$ in $G$ is given by
  \[
    d(U_1,U_2) = \frac{e(U_1,U_2)}{|U_1||U_2|}.
  \]
  We say that $G$ (or the pair $(V_1,V_2)$) is $\e$-\emph{regular} if for all $U_i \subseteq V_i$ with $ |U_i| \geq \e |V_i|$, $i = 1,2$, we have
  \begin{equation*}
    \left| d(U_1,U_2)- d(V_1,V_2)\right| \leq \e.
  \end{equation*}
  If additionally we have $d(V_1,V_2) \geq d$ and $\deg(v,V_i) \geq \delta |V_i|$ for all $v \in
  V_{3-i}$, $i=1,2$, then we say that $G$ (or $(V_1,V_2)$) is $(\e,d,\delta)$-\emph{super-regular}.
  We often say that $G$ is \emph{$(\epsilon,d)$-super-regular} instead of
  $(\epsilon,d,d)$-super-regular.

  We begin with some simple facts about super-regular pairs. The first one is known as the slicing
  lemma and roughly says that if we take a large induced subgraph in a dense regular pair we still
  get a dense regular pair. Its proof is straightforward from the definition of a regular pair.
\begin{lemma}[Slicing lemma]\label{lem:slicing}
  Let $\beta > \e > 0$, $ d \in [0,1]$ and let $(V_1,V_2)$ be an $(\epsilon,d,0)$-super-regular pair. Then any pair $(U_1,U_2)$ with $|U_i| \geq \beta |V_i|$ and $U_i\subseteq V_i$, $i=1,2$, is $(\epsilon',d',0)$-super-regular with $\epsilon' = \max\{ \epsilon/\beta, 2\epsilon \}$ and $d' = d-\epsilon$.
\end{lemma}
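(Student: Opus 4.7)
The plan is to verify the three conditions defining $(\e',d',0)$-super-regularity of $(U_1,U_2)$ directly from the definition, using the fact that $U_1$ and $U_2$ each occupy at least a $\beta$-fraction of the original parts. Note first that the minimum-degree condition is vacuous, since $(\e',d',0)$-super-regularity only requires $\deg(v,U_i) \geq 0$, so nothing is to be checked there.

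For the density bound, I would apply the $\e$-regularity of $(V_1,V_2)$ to the pair $(U_1,U_2)$ itself: since $|U_i| \geq \beta |V_i| > \e |V_i|$, regularity gives
\[
|d(U_1,U_2) - d(V_1,V_2)| \leq \e,
\]
and combined with $d(V_1,V_2) \geq d$ this yields $d(U_1,U_2) \geq d - \e = d'$, as required.

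For the $\e'$-regularity of $(U_1,U_2)$, I would take arbitrary $W_i \subseteq U_i$ with $|W_i| \geq \e' |U_i|$ and check that $|d(W_1,W_2) - d(U_1,U_2)| \leq \e'$. The key observation is that
\[
|W_i| \geq \e' |U_i| \geq \tfrac{\e}{\beta} \cdot \beta |V_i| = \e |V_i|,
\]
so $\e$-regularity of $(V_1,V_2)$ applies to $(W_1,W_2)$, giving $|d(W_1,W_2) - d(V_1,V_2)| \leq \e$. Combining this with the analogous estimate $|d(U_1,U_2) - d(V_1,V_2)| \leq \e$ used above, the triangle inequality yields
\[
|d(W_1,W_2) - d(U_1,U_2)| \leq 2\e \leq \e'.
\]
The role of the $\max$ in the definition of $\e'$ is precisely to make both of these steps go through simultaneously: the term $\e/\beta$ ensures that subsets of relative size $\e'$ in $U_i$ are large enough to invoke regularity in $V_i$, while $2\e$ accommodates the doubled error from the triangle inequality.

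There is no real obstacle here; the only delicate point is the bookkeeping around the choice of $\e'$, which is already baked into the statement. The whole proof is essentially a one-paragraph unpacking of the definitions.
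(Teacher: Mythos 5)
Your proof is correct and is exactly the standard unpacking the paper has in mind: the paper omits the argument, noting only that it "is straightforward from the definition of a regular pair," and your verification (vacuous degree condition, $\beta > \e$ to bound $d(U_1,U_2)$, then $\e' \geq \e/\beta$ to make subsets of $U_i$ large in $V_i$ and $\e' \geq 2\e$ to absorb the triangle-inequality error) is precisely that straightforward check.
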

The following lemma essentially says that after removing few vertices from a super-regular pair and
adding few new vertices with large degree, we still have a super-regular pair. The reader can find a
proof of it in \cref{appendix}.
\begin{lemma}\label{lem:robust}
  Let $0 < \e < 1/2$ and let $d,\delta \in [0,1]$ so that $\delta \geq 4 \e$. Let $(V_1,V_2)$ be an
  $(\epsilon,d,\delta)$-super-regular pair in a graph $G$. Let $X_i \subseteq V_i$ for $i \in \{1,2\}$,
  and let $Y_1,Y_2$ be disjoint subsets of $V(G) \setminus (V_1 \cup V_2)$. Suppose that for each $i
  \in \{1,2\}$ we have $|X_i|,|Y_i| \leq \e^2 |V_i|$ and $\deg(v,V_i) \geq \delta |V_i|$ for every $v
  \in Y_{3-i}$. Then the pair $( (V_1 \setminus X_1) \cup Y_1, (V_2 \setminus
  X_2) \cup Y_2)$ is $(8\epsilon,d-8\e,\delta/2)$-super-regular.
\end{lemma}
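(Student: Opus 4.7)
My plan is to verify each of the three defining properties of $(8\epsilon, d-8\epsilon, \delta/2)$-super-regularity separately for the modified pair. Set $V'_i := (V_i \setminus X_i) \cup Y_i$. Since $|X_i|, |Y_i| \leq \epsilon^2 |V_i|$, we have $(1-\epsilon^2)|V_i| \leq |V'_i| \leq (1+\epsilon^2)|V_i|$, so all set-size changes are small lower-order corrections.

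The minimum-degree condition is the easiest. Any $v \in V'_{3-i}$ either lies in $V_{3-i}\setminus X_{3-i}$ (so $\deg(v,V_i) \geq \delta |V_i|$ by super-regularity of $(V_1,V_2)$) or lies in $Y_{3-i}$ (so $\deg(v,V_i)\ge\delta|V_i|$ by hypothesis). Passing from $V_i$ to $V'_i$ loses at most $|X_i| \leq \epsilon^2|V_i|$ edges and gains nothing we need, so $\deg(v,V'_i) \geq (\delta - \epsilon^2)|V_i|$. Using $\delta \geq 4\epsilon$ and the upper bound on $|V'_i|$, this comfortably exceeds $(\delta/2)|V'_i|$.

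For the regularity condition, fix $U_i \subseteq V'_i$ with $|U_i| \geq 8\epsilon|V'_i|$ and set $U'_i := U_i \cap V_i = U_i \setminus Y_i$. A straightforward size estimate shows $|U'_i| \geq \epsilon|V_i|$, so the $\epsilon$-regularity of $(V_1,V_2)$ gives $|d(U'_1,U'_2) - d(V_1,V_2)| \leq \epsilon$. The edge counts $e(U_1,U_2)$ and $e(U'_1,U'_2)$ differ by at most $|Y_1|\cdot|U_2| + |U_1|\cdot|Y_2| \leq \epsilon^2(|V_1||U_2| + |V_2||U_1|)$; dividing by $|U_1||U_2|$ and using $|U_i| \geq 7\epsilon|V_i|$ shows that $|d(U_1,U_2) - d(U'_1,U'_2)|$ is only $O(\epsilon)$. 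Running the same comparison with $U_i$ replaced by the whole of $V'_i$ controls $|d(V'_1,V'_2) - d(V_1,V_2)|$ by $O(\epsilon)$ as well. Assembling the inequalities via the triangle inequality produces both $|d(U_1,U_2) - d(V'_1,V'_2)| \leq 8\epsilon$ and $d(V'_1,V'_2) \geq d - 8\epsilon$, the remaining two required conditions.

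The only mildly delicate part is bookkeeping: one must confirm that, after collecting all the error terms coming from the $O(\epsilon^2)$ set perturbations, everything still fits under the slacks $8\epsilon$ and $\delta/2$. Since the perturbations are of size $\epsilon^2|V_i|$, an order of magnitude smaller than the $\epsilon|V_i|$ thresholds controlling regularity, and since the hypothesis $\delta \geq 4\epsilon$ leaves ample room in the minimum-degree estimate, I do not expect a genuine obstacle here — the proof is essentially routine counting once the decomposition $U_i = U'_i \sqcup (U_i \cap Y_i)$ is in place.
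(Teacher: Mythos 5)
Your proposal is correct and follows essentially the same route as the paper's proof: decompose each test set $U_i \subseteq V'_i$ into the part inside $V_i$ and the part inside $Y_i$, bound the edge-count difference by the small contribution from $Y_i$, invoke $\epsilon$-regularity of $(V_1,V_2)$ on the old parts, and combine by the triangle inequality; the min-degree check is handled identically. The paper's own argument is just the same calculation written out with the notation $Z_i = Z_i' \sqcup Z_i''$.
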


Let $k \geq 2$ be an integer and let $G$ be a graph. Given disjoint sets of vertices $V_1, \ldots,
V_k \subseteq V(G)$, we call $Z = (V_1, \ldots, V_k)$ a \emph{$k$-cylinder} and often identify it with the
induced $k$-partite subgraph $G[V_1, \ldots, V_k]$. We write $V_i(Z) = V_i$ for every $ i \in [k] $.
We say that $Z$ is \emph{$\e$-balanced} if
\[
  \max_{i\in[k]} \card{V_i(Z)} \leq (1+\eps) \min_{i\in[k]} \card{V_i(Z)}
\]
and \emph{balanced} if it is $0$-balanced. Furthermore, we say that $Z$ is $\epsilon$-regular if all
the $\binom{k}{2}$ pairs $(V_i,V_j)$ are $\epsilon$-regular. If $G$ is an $r$-edge-coloured graph and $i
\in [r]$, we say that $Z$ is $\e$-regular in colour $i$ if it is $\e$-regular in $G_i$, the graph
consisting of all edges of $G$ with colour $i$. Similarly, we define $(\eps,d)$-regular and
$(\epsilon,d,\delta)$-super-regular cylinders.

As sketched in \cref{sec:overview}, we will use super-regular cylinders as absorbers. The following lemma, which Grinshpun and S\'ark\"ozy~\cite{Grinshpun2016} deduced from the blow-up lemma~\cite{Komlos1997, Komlos1998, Sarkozy2014} and the Hajnal-Szemer\'edi theorem~\cite{Hajnal1970},\footnote{The second part of the theorem is not explicitly stated in \cite{Grinshpun2016} but follows readily from the blow-up lemma and the Hajnal-Szemer\'edi theorem.}
allows us to do this.

\begin{lemma}\label{thm:blowup}
  There is a constant $K$, such that for all $ 0 \leq \delta \leq d \leq 1/2$, $ \Delta \in \bN $, $k = \Delta+2$, $0 < \e \leq {( \delta d^\Delta )}^{K}$, and $ \cF \in {\cF}_{\Delta}$, the following is true for every $(\e,d,\delta)$-super-regular $k$-cylinder $Z = (V_1, \ldots, V_k)$.
  \begin{enumerate}[(i)]
    \item If $Z$ is $\e$-balanced, then its vertices can be partitioned into at most $\Delta+3$ copies of graphs from $\cF$.
    \item If $|V_i| \geq |V_1|$ for all $i = 2, \ldots, k$, then there is a copy of a graph from $\cF$ covering $V_1$ and at most $|V_1|$ vertices of each of $V_2, \ldots, V_k$.
  \end{enumerate}
\end{lemma}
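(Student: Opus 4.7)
The plan is to combine the Hajnal--Szemer\'edi equitable-colouring theorem with the Blow-up Lemma of Koml\'os, S\'ark\"ozy and Szemer\'edi. Recall that Hajnal--Szemer\'edi implies that every graph of maximum degree $\Delta$ admits an equitable $(\Delta+1)$-colouring, and in particular an equitable $k$-colouring for $k = \Delta + 2$. This gives, for every $F \in \mathcal{F}$, a partition of $V(F)$ into $k$ independent sets of almost-equal size, which is exactly what is needed to embed $F$ into a super-regular $k$-cylinder via the Blow-up Lemma.

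For part (ii), set $n_1 := |V_1|$ and $m := k n_1$. Since $\Delta(F_m) \leq \Delta = k-2$, Hajnal--Szemer\'edi produces an equitable $k$-colouring $V(F_m) = C_1 \cup \cdots \cup C_k$ with $|C_i| = n_1$ for every $i$. Set $V_1' := V_1$ and, for each $i \geq 2$, pick a subset $V_i' \subseteq V_i$ of size $n_1$ uniformly at random. A routine Chernoff bound, combined with a short direct regularity check in the spirit of the Slicing Lemma, shows that with positive probability the sub-cylinder $(V_1', \ldots, V_k')$ is still $(O(\eps), d - O(\eps), \delta/2)$-super-regular. Since $\eps \leq (\delta d^\Delta)^K$ by hypothesis, taking $K$ a sufficiently large absolute constant ensures these parameters lie in the regime of the Blow-up Lemma. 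Applying the Blow-up Lemma yields an embedding of $F_m$ into $(V_1', \ldots, V_k')$ with each $C_i$ mapped into $V_i'$; this embedding covers $V_1$ and uses exactly $n_1 = |V_1|$ vertices of each other $V_i$.

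For part (i), the plan is to reduce to part (ii) by splitting the cylinder into $\Delta + 3$ balanced super-regular sub-cylinders and covering each with one copy. Concretely, partition each $V_i$ uniformly at random into $\Delta + 3$ blocks of sizes as equal as possible; by Chernoff plus a Slicing-Lemma-type estimate, each of the resulting sub-cylinders is super-regular with only a constant-factor loss in the parameters, and the $\eps$-balance of $Z$ passes to each of them. One now applies part (ii) inside each sub-cylinder, taking $V_1$ to be any minimum-size part of the sub-cylinder; in the (essentially) balanced case this single copy already covers the full sub-cylinder, and a small amount of rounding can be absorbed into one additional copy, yielding at most $\Delta + 3$ copies in total.

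The principal obstacle is controlling the compounding loss in the super-regularity parameters: both the initial random subdivision in part (i) and the random subset selection inside part (ii) deteriorate $\eps$, $d$ and $\delta$ by constant factors, and any rounding of block sizes risks further degradation. All of these losses must fit inside the single budget $\eps \leq (\delta d^\Delta)^K$, which is precisely why the lemma is stated with a polynomial dependence on $\delta d^\Delta$ and why the absolute constant $K$ must be chosen sufficiently large.
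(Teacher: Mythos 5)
The paper does not prove this lemma: part (i) is attributed to Grinshpun and S\'ark\"ozy, and a footnote asserts that part (ii) ``follows readily'' from the blow-up lemma and the Hajnal--Szemer\'edi theorem. Your sketch invokes exactly these two tools, so the strategy matches the cited source.

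There is, however, a real gap in your argument for part (ii). You propose to shrink each $V_i$ ($i\ge 2$) to a random subset $V_i'$ of size $n_1=|V_1|$ and invoke ``Chernoff plus a regularity check in the spirit of the Slicing Lemma'' to preserve super-regularity. But the slicing lemma (\cref{lem:slicing}) only applies when the retained fraction $|V_i'|/|V_i|$ exceeds $\e$; in the hypotheses there is no lower bound whatsoever on $|V_1|/|V_i|$, and indeed in the paper's application (\cref{claim:cover-S}) the ratio $|U_i'|/|U_1'|\approx |V_i|/|V_1|$ can be arbitrarily large. Establishing that a uniformly random subset that is a \emph{vanishing} fraction of one side of an $\e$-regular pair still forms an $O(\e)$-regular pair is a genuinely separate statement: one needs e.g.\ the defect/codegree characterisation of regularity together with a union bound over pairs, and the argument only succeeds when $n_1$ is at least polylogarithmic in $\max_i|V_i|$ --- a restriction absent from the lemma. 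So ``a routine Chernoff bound'' does not cover the step that is actually the crux of deducing (ii), and the proof as written does not go through for imbalanced cylinders.

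Part (i) also glosses over the $\e$-balance. The $\Delta+3$ random sub-cylinders you form are themselves only approximately balanced, and part (ii) applied inside one of them covers a smallest part (plus at most that many vertices of each other part), not the whole sub-cylinder. The remark that ``a small amount of rounding can be absorbed into one additional copy'' is unsupported: the leftover could amount to $\Theta(\e k |V_1|)$ vertices distributed very unevenly across the parts (some parts possibly having no leftover at all), and it is not clear that such a leftover set forms anything embeddable by a single copy of some $F_m\in\cF$. Handling the imbalance correctly is exactly where the extra ``$+1$'' in $\Delta+3$ has to be earned, and this needs an explicit argument rather than a gesture at rounding.
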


It is important in the proof of \cref{thm:main} that we can find super-regular $k$-cylinders which cover linearly many vertices. The existence of such a pair follows readily from the regularity lemma.
Conlon and Fox~\cite[Lemma~5.3]{Conlon2012} used the weak regularity lemma of Duke, Lefmann, and R\"odl~\cite{Duke1995} to obtain better constants. We shall use the following coloured version of their result, the proof of which is very similar and can be found in \cref{appendix}. See
also~\cite[Lemma~2]{Grinshpun2016} for a 2-coloured version which follows readily from the
non-coloured version.

\begin{lemma}\label{lemma:regcyl}
  Let $k,r \geq 2$, $0 < \e < 1/(rk)$ and $ \gamma = \e^{r^{8rk}\e^{-5}}$. Then every $r$-edge-coloured complete graph on $n \geq 1/\gamma$ vertices contains, in one of the colours, a balanced
  $(\e,1/2r)$-super-regular $k$-cylinder $Z = (U_1, \ldots, U_k)$ with parts of size at least $\gamma
  n$.
\end{lemma}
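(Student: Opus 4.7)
The plan is to combine the weak regularity lemma of Duke, Lefmann and R\"odl with a Ramsey-theoretic pigeonhole over the $r$ colours, then upgrade regularity to super-regularity in the standard way. Let $k' = R_r(K_k)$ be the $r$-colour Ramsey number of $K_k$, so $k' \leq r^{O(rk)}$ by standard bounds.

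I start by equipartitioning $V(K_n)$ into $k'$ parts $W_1,\ldots,W_{k'}$ and then applying the DLR weak regularity lemma $r$ times in succession, once per colour: each application passes from the current cylinder to a sub-cylinder in which every one of the $\binom{k'}{2}$ pairs is $\varepsilon$-regular in the colour being processed. Because $\varepsilon$-regularity is inherited by sufficiently large sub-pairs (this is exactly \cref{lem:slicing} with $\beta$ close to $1$, or, alternatively, by running each round with a slightly smaller error parameter so the composition gives $\varepsilon$ at the end), after all $r$ rounds I obtain a $k'$-cylinder $(W_1',\ldots,W_{k'}')$ whose every pair is $\varepsilon$-regular in every colour simultaneously, with $|W_i'| \geq \gamma_1 n$ for an explicit $\gamma_1$.

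For the next step I use that the $r$ colour densities in any pair sum to $1$, so in every pair $(W_i',W_j')$ some colour has density at least $1/r$. Picking such a colour in each pair defines an auxiliary $r$-edge-colouring of $K_{k'}$. Since $k' \geq R_r(K_k)$, Ramsey's theorem yields $k$ indices on which this auxiliary colouring is constant, say equal to $c^\ast$; the corresponding parts form a $k$-cylinder $Z$ that is $\varepsilon$-regular in colour $c^\ast$ with density at least $1/r$ in every pair. I then promote $Z$ to super-regularity by the routine cleanup: from each part discard the at most $(k-1)\varepsilon$ fraction of vertices whose colour-$c^\ast$ degree into some other part falls below $(1/r-\varepsilon)$ times the size of that part. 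This yields an $(\varepsilon',1/(2r))$-super-regular $k$-cylinder in colour $c^\ast$ for some $\varepsilon'$ only slightly larger than $\varepsilon$, and a final trim equalises the part sizes.

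The main obstacle is the bookkeeping needed to reach the prescribed $\gamma = \varepsilon^{r^{8rk}\varepsilon^{-5}}$. The dominant cost is the $r$-fold iteration of DLR: each round contracts sizes by a factor roughly $\varepsilon^{\mathrm{poly}((k')^2,\varepsilon^{-1})}$, so across $r$ rounds the total contraction is bounded by $\varepsilon^{\, r\cdot (k')^2 \cdot \varepsilon^{-5}}$. Substituting $k' \leq r^{O(rk)}$ and absorbing the polynomial factors into the exponent yields $\varepsilon^{\, r^{O(rk)}\varepsilon^{-5}}$, which with a careful choice of the implicit constants fits inside the target $\varepsilon^{\, r^{8rk}\varepsilon^{-5}}$. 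The subsequent Ramsey restriction discards parts but does not shrink the surviving ones, and the super-regularity cleanup plus balancing step cost only a $1-O(k\varepsilon)$ factor, so all auxiliary losses are negligible against the leading term and the claimed bound follows.
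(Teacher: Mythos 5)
Your overall skeleton is the same as the paper's: equipartition into roughly $R_r(K_k) \leq r^{O(rk)}$ parts, obtain a $k'$-cylinder that is $\e$-regular in all $r$ colours simultaneously, pass to an auxiliary majority-colour edge-colouring of $K_{k'}$, apply Ramsey to pick $k$ parts monochromatic in colour $c^\ast$ with pairwise density $\geq 1/r$, and upgrade to super-regularity by a standard cleanup (essentially \cref{lem:super-reg}). The issue is the middle step.

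Your route to simultaneous regularity — applying the (uncoloured) Duke--Lefmann--R\"odl lemma $r$ times in succession, restricting to a regular cylinder after each round, and hoping that earlier rounds' regularity is inherited via \cref{lem:slicing} — does not work, and the hedge "running each round with a slightly smaller error parameter" cannot repair it while keeping the bound. The DLR lemma returns a cylinder whose parts are a $\beta$-fraction of the current parts with $\beta \approx \e^{(k')^2 \e^{-5}}$, which is vastly smaller than $\e$, not "close to $1$". \cref{lem:slicing} with slicing ratio $\beta$ degrades the regularity parameter to $\e/\beta = \e^{1 - (k')^2\e^{-5}}$, which is astronomically larger than $1$; so the colour-$1$ regularity established in round one tells you nothing about the sub-cylinder selected in round two. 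If instead you shrink $\e_1$ so that $\e_1/\beta_2 \leq \e$, you are forced to take $\e_1 \leq \e\cdot\beta_2$, which makes the round-one contraction $\beta_1 = \e_1^{(k')^2\e_1^{-5}}$ doubly exponential in $\e^{-1}$ already for $r=2$, and tower-type for general $r$; this destroys the claimed $\gamma = \e^{r^{8rk}\e^{-5}}$, which has only $\e^{-5}$ in the exponent.

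What is actually needed — and what the paper proves as \cref{thm:weakreg} in the appendix — is a \emph{coloured} version of DLR: a single cylindrical partition which is $\e$-regular in \emph{every} colour at once, obtained by running the DLR energy-increment with the energies of all $r$ colour classes summed, so the total energy is bounded by $r$ and the part-size fraction is $\beta = \e^{r k^2 \e^{-5}}$ (a factor $r$ in the exponent, not an $r$-fold composition). With such a partition, a counting argument shows that at least one cylinder is $\e$-regular in all colours, and the rest of your argument (majority colouring, Ramsey, \cref{lem:super-reg}) then goes through exactly as you describe. So the gap in your proposal is the absence of this coloured weak-regularity statement; the naive iteration of the uncoloured lemma is not a substitute.
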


The following lemma further guarantees that this remains possible as long as the host-graph has many $k$-cliques. It is also a straightforward consequence of the weak regularity lemma of Duke, Lefmann, and R\"odl and we provide a proof in \cref{appendix}.

\begin{lemma}\label{lem:regcyl-partite}
Let $k \geq 2$, and let $0 < \e < 1/2$ and $ 2k\e \leq d \leq 1$. Let $\gamma = \e^{k^2\e^{-12}}$.
Suppose that $G$ is a $k$-partite graph with parts $V_1, \ldots, V_k$ with at least $d|V_1| \cdots |V_k|$ cliques of size $k$.
Then there is some $\gamma' \in  [\gamma,\e]$ and an $(\e,d/2)$-super-regular $k$-cylinder $Z = (U_1, \ldots, U_k)$ in $G$ with $U_i \subset V_i$ and $|U_i| = \lfloor \gamma' |V_i| \rfloor$ for every $i \in [k]$.
\end{lemma}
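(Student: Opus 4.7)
The plan is to apply the weak regularity lemma of Duke, Lefmann, and Rödl (DLR) to the $k$-partite graph $G$ and then perform a clean-up to extract a super-regular sub-cylinder from a dense, regular cell of the partition.

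First, I would apply DLR with a suitably small regularity parameter $\e' \ll \e$, obtaining a partition of each $V_i$ into $t$ pieces $V_i^1,\dots,V_i^t$ of near-equal size, such that all but at most an $\e'$-fraction (weighted by the product of part sizes) of the $t^k$ induced $k$-cylinders are $\e'$-regular. A quantitative version of DLR forces $t \leq \e^{-O(k^2/\e^{12})}$, so $1/t \geq \gamma$; one may also arrange $t \geq 1/\e$ so that $1/t \leq \e$.

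Second, I would pigeonhole to find a dense, $\e'$-regular cylinder. The irregular cylinders together contain at most $\e' \prod_i |V_i|$ copies of $K_k$ by a crude volume bound. For any $\e'$-regular cylinder $C = (V_1^{j_1},\dots,V_k^{j_k})$ in which some pair has density strictly less than $\alpha d$ (for a constant $\alpha = \alpha(k) \in (1/2, 1)$ to be chosen), the count of $K_k$'s in $C$ is at most $\alpha d \prod_l |V_l^{j_l}|$, since this count is bounded by the edge count in that pair times the product of the other part sizes. Summing over such ``bad'' cylinders yields at most $\alpha d \prod_l |V_l|$ copies of $K_k$. Since $G$ contains at least $d \prod_l |V_l|$ cliques and $\alpha d + \e' < d$ for suitable $\alpha$ (using $d \geq 2k\e$), at least one $\e'$-regular cylinder has all $\binom{k}{2}$ pair densities at least $\alpha d$; choosing $\alpha$ comfortably above $1/2$ gives pair densities safely above $d/2$.

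Third, I would clean up this cylinder to an $(\e, d/2)$-super-regular one. Within the chosen cylinder, $\e'$-regularity guarantees that in each pair at most an $\e'$-fraction of vertices on each side has degree below $(\alpha d - \e')$ times the opposite part size; removing the union of these ``low-degree'' vertices from each part costs at most a $(k-1)\e'$ fraction per part. Invoking \cref{lem:slicing} and \cref{lem:robust} (with no added vertices) on the survivors yields an $(\e, d/2)$-super-regular cylinder, provided $\e'$ is taken polynomially small in $\e$, $d$, and $1/k$. Finally, to enforce the common size $\lfloor \gamma' |V_i| \rfloor$ demanded by the statement, I set $\gamma'$ to the smallest surviving ratio $|U_i|/|V_i|$ and trim each part down via \cref{lem:slicing}; the resulting $\gamma'$ lies in $[\gamma, \e]$ thanks to the DLR size bound. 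The main obstacle is purely bookkeeping: choosing $\e'$ and the DLR partition parameters so that all clean-up and trimming losses leave the output $(\e, d/2)$-super-regular while the final ratio $\gamma'$ still satisfies $\gamma' \geq \gamma = \e^{k^2\e^{-12}}$, which pins down the precise exponent in the statement.
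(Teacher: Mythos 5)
Your proposal matches the paper's proof: both apply the Duke--Lefmann--R\"odl weak regularity lemma with a finer parameter, pigeonhole on the clique count to locate a dense regular cylinder, and then discard low-degree vertices (the paper packages this step as \cref{lem:super-reg}) to reach an $(\e,d/2)$-super-regular, relatively balanced subcylinder. Two small bookkeeping points you left implicit but which the paper handles: the DLR lemma used (\cref{thm:weakreg}) produces a cylindrical, not product, partition together with small exceptional sets $R_i$ whose incident cliques must also be absorbed into the error term; and the degree clean-up cannot literally cite \cref{lem:robust} (which presupposes that the original pair is already super-regular with a minimum-degree condition) --- one instead removes the few low-degree vertices directly and invokes only \cref{lem:slicing} for the regularity part, which is exactly what \cref{lem:super-reg} does.
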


\section{Proof of \texorpdfstring{\cref{thm:main}}{the main theorem}}\label{sec:proof}
In the proof, we will use the following theorem of Fox and Sudakov~\cite{Fox2009} about $r$-colour Ramsey numbers of bounded-degree graphs.

\begin{thm}[{\cite[Theorem 4.3]{Fox2009}}]\label{thm:bounded-ramsey}
  Let $k,\Delta,r,n \in \bN$ with $ r \geq 2$ and let $H_1, \ldots, H_r$ be $k$-partite graphs with
  $n$ vertices and maximum degree at most $\Delta$. Then
  \[
    R(H_1, \ldots, H_r) \leq r^{2rk \Delta} n.
  \]
\end{thm}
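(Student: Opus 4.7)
The plan is a two-level pigeonhole argument followed by a greedy embedding. Fix $N = r^{2rk\Delta}n$ and any $r$-edge-colouring of $K_N$, and build a descending chain $V_0 \supseteq V_1 \supseteq \cdots \supseteq V_T$ with $V_0 = V(K_N)$ as follows: at step $t$, pick any $v_t \in V_{t-1}$ and, by pigeonhole on the $r$ colour classes of edges from $v_t$ inside $V_{t-1}$, choose a colour $c_t \in [r]$ with $|V_{t-1} \cap N_{c_t}(v_t)| \geq |V_{t-1}|/r$, then set $V_t := V_{t-1} \cap N_{c_t}(v_t)$. After $T = 2rk\Delta$ iterations one has $|V_T| \geq N/r^T = n$.

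A second pigeonhole on the sequence $c_1, \dots, c_T$ yields a colour $c^* \in [r]$ that appears on some index set $I \subseteq [T]$ with $|I| \geq 2k\Delta$. For $i < j$ in $I$, $v_j \in V_i \subseteq N_{c^*}(v_i)$, so $S := \{v_i : i \in I\}$ forms a $c^*$-monochromatic clique of size at least $2k\Delta$, and every vertex of $V_T$ is joined to every vertex of $S$ by a $c^*$-edge. Thus $S \cup V_T$ carries a monochromatic ``book'' in colour $c^*$ with spine $S$ and pages covering at least $n$ further vertices.

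The final step is to embed $H_{c^*}$ into this colour-$c^*$ structure. The role of the clique $S$ is to absorb a small ``core'' of $H_{c^*}$ whose removal leaves something simple enough to place into $V_T$, and I would exploit the $k$-partite bounded-degree structure as follows: since each vertex of $H_{c^*}$ has at most $\Delta$ neighbours and lies in one of $k$ parts $A_1,\dots,A_k$, I would iterate the majority-colour procedure once more inside $V_T$ to produce there a further monochromatic substructure in colour $c^*$ whose induced minimum degree is at least a $(1 - 1/(2\Delta))$-fraction of its own size. A layer-by-layer greedy embedding processing $A_1,\dots,A_k$ in turn then succeeds: when placing each new vertex $u$ of $A_j$, its at most $\Delta$ already-embedded neighbours each forbid at most a $1/(2\Delta)$-fraction of the candidate set, leaving a positive fraction of legal choices that also avoid the $O(n)$ previously used vertices.

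The main obstacle is this embedding phase. The clique $S$ of size only $2k\Delta$ cannot host a linear portion of $H_{c^*}$, so the bulk of the embedding is forced into $V_T$, which a priori carries no colour-$c^*$ structure at all. The real work is to extract, using essentially the same pigeonhole budget, an additional structured colour-$c^*$ subgraph inside $V_T$ that is simultaneously large enough (at least $n$ vertices, with enough room in each pseudo-part) and dense enough to sustain a bounded-degree greedy embedding. Calibrating how many of the $2rk\Delta$ iterations are spent on the clique phase versus on refining the tail — so that both stages succeed while the accumulated factor-of-$r$ losses fit inside $r^{2rk\Delta}$ — is precisely what the stated exponent is designed to accommodate.
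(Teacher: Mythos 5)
The paper does not prove this statement; it is quoted verbatim as Theorem~4.3 of Fox and Sudakov~\cite{Fox2009}, whose proof proceeds via dependent random choice. So the comparison is really between your attempt and their argument.

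Your first stage is sound: the chain $V_0 \supseteq \cdots \supseteq V_T$ with $T = 2rk\Delta$, the pigeonhole giving a colour $c^*$ used at least $2k\Delta$ times, and the resulting monochromatic ``book'' consisting of a $c^*$-clique $S$ of size $\geq 2k\Delta$ fully $c^*$-joined to $V_T$ with $|V_T| \geq n$ are all correctly derived. The problem is what comes after. Since $|S| = 2k\Delta$ is a constant and $H_{c^*}$ has $n$ vertices, essentially all of $H_{c^*}$ must be embedded inside $V_T$, and you have extracted no information whatsoever about which colours appear \emph{inside} $V_T$. Your proposed fix is to run the majority procedure again inside $V_T$ to obtain a colour-$c^*$ subgraph with minimum degree at least $(1 - 1/(2\Delta))$ of its own size. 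This fails for two independent reasons. First, a second majority argument inside $V_T$ has no reason to select the colour $c^*$ you already committed to; it will pick whichever colour happens to dominate there. Second, and more fundamentally, such a near-complete monochromatic substructure simply need not exist: in a uniformly random $r$-colouring of $K_N$, every colour has density close to $1/r$ in every vertex subset of size $\omega(\log N)$, so no subset of size $\geq n$ can have any colour with minimum degree anywhere near $(1-1/(2\Delta))$ of its size once $\Delta \geq 1$ and $r \geq 2$. Your greedy embedding is therefore targeting a structure that does not exist.

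The correct replacement is precisely what your ``main obstacle'' paragraph is circling around, but it requires a genuinely different tool. Dependent random choice works with a colour of density only $\geq 1/r$: one samples a small random set of vertices, passes to their common colour-$c$ neighbourhood, and shows that after discarding a few bad vertices \emph{every} $\Delta$-subset of the surviving set has $\geq n$ common colour-$c$ neighbours. That ``every small subset has a large common neighbourhood'' property is exactly what a layer-by-layer greedy embedding of a bounded-degree $k$-partite graph needs, and it is attainable from density $1/r$ alone --- unlike the high-minimum-degree structure you are asking for. Without this idea (or an equivalent), the calibration you describe in your final paragraph cannot close the gap: no amount of budgeting $r$-factors inside $r^{2rk\Delta}$ will make colour $c^*$ appear densely in $V_T$ if the adversarial colouring has decided otherwise.
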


Recall that $\cF_{\Delta}$ denotes the collection of all sequences of graphs $\cF$ with $\Delta(F)
\leq \Delta$, for every $F \in \cF$, and let $\cF_{\Delta,k}$ be the collection of sequences $\cF
\in \cF_{\Delta}$ such that $F$ is $k$-partite, for every $F \in \cF$. Note that
$\cF_{\Delta}=\cF_{\Delta,\Delta+1}$. The following consequence of the previous theorem states
that, for each $\cF \in \cF_{k,\Delta}$, we can cover almost all vertices of $K_n$ with monochromatic copies of graphs from $\cF$.

\begin{prop}\label{prop:greedy}
  Let $\Delta,k,r \in \bN$, let $\gamma \in (0,1]$ and let $C = 4r^{2rk \Delta} \log (1/\gamma)$.
  Then, for every $ \cF \in \cF_{\Delta,k}$ and every $r$-edge-coloured $K_n$ with $n \geq r^{-2rk\Delta}$, it is possible to cover all but $\gamma n$ vertices of $K_n$ with at most $C$ vertex-disjoint monochromatic copies of graphs from $\cF$.
\end{prop}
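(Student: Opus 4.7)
\textbf{Plan for the proof of \cref{prop:greedy}.} Set $\rho := r^{2rk\Delta}$, so that $C = 4\rho \log(1/\gamma)$. The plan is a straightforward greedy iteration driven by \cref{thm:bounded-ramsey}: we repeatedly locate a large monochromatic copy of a graph from $\cF$ inside the currently uncovered vertex set and remove its vertices.

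Formally, initialise $W_0 := V(K_n)$ and proceed in rounds $i \geq 0$. In round $i$ put $m_i := |W_i|$; if $m_i \leq \gamma n$, stop. Otherwise set $n_i := \max\{\lfloor m_i/\rho\rfloor,\, 1\}$ and invoke \cref{thm:bounded-ramsey} on $K_n[W_i]$: since $F_{n_i} \in \cF_{\Delta,k}$ is $k$-partite with maximum degree at most $\Delta$, the theorem gives $R(F_{n_i}, \ldots, F_{n_i}) \leq \rho\, n_i \leq m_i$ (the second inequality being trivial when $n_i = 1$, and holding by construction when $m_i \geq \rho$). Hence $K_n[W_i]$ contains a monochromatic copy of $F_{n_i}$; add it to the tiling and set $W_{i+1} := W_i \setminus V(F_{n_i})$.

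It remains to bound the number of rounds by $C$. In the main phase (while $m_i \geq \rho$) the recursion $m_{i+1} \leq m_i(1 - 1/\rho) + 1$ holds, so iterating and summing the resulting geometric series yields $m_t \leq n e^{-t/\rho} + \rho$. Taking $t$ of order $2\rho \log(1/\gamma)$ thus drives $m_t$ below $\gamma n / 2 + \rho$; if $\gamma n \geq 2\rho$ this already suffices, while otherwise a short secondary phase covers the remaining at most $\rho$ vertices one at a time using copies of $F_1$, contributing at most $\rho$ further rounds. The factor $4$ in $C$ absorbs the slack between the two phases, and the hypothesis $n \geq r^{-2rk\Delta}$ is only needed to make these comparisons meaningful in the small regime. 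The main obstacle is purely bookkeeping --- the integer rounding in $\lfloor m_i/\rho \rfloor$ and the handover between the exponential-decay phase and the single-vertex phase --- and no idea beyond \cref{thm:bounded-ramsey} is required.
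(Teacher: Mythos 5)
Your proposal is essentially the paper's proof: both run a greedy iteration driven by \cref{thm:bounded-ramsey}, at each round extracting a monochromatic copy of $F_{\lfloor m/\rho\rfloor}$ from the $m$ currently uncovered vertices (the paper writes $t = \rho^{-1}$ and $n_{i+1}=\lfloor t|V_i|\rfloor$), falling back to singleton copies once the uncovered set is smaller than $O(\rho)$, and bounding the number of rounds via the geometric decay $m_i \le (1-1/(2\rho))^i n$. The minor bookkeeping differences (your $\max\{\lfloor m_i/\rho\rfloor,1\}$ and recursion $m_{i+1}\le m_i(1-1/\rho)+1$ versus the paper's explicit threshold $|V_i|<2/t$) are cosmetic, and both arguments use the factor $4$ in $C$ to absorb exactly the same slack.
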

\begin{proof}
  Let $ \cF = \left\{ F_1, F_2, \ldots \right\} \in \cF_{\Delta,k}$, $t = r^{-2rk \Delta}$,
  $C=(4/t)\log(1/\gamma)$ and $n \geq r^{-2rk\Delta}$.
  Consider $n_1 = \lfloor t n \rfloor \geq tn/2$. By \cref{thm:bounded-ramsey}, since $R_r(F_{n_1}) \leq
  t^{-1}n_1 \leq n$, there is a monochromatic copy of $F_{n_1}$ in $K_n$. Let $H_1$ be such copy and
  let $V_1 = V \setminus V(H_1)$. Note that $|V_1| = n - n_1 \leq (1-t/2)n$.

  Suppose that we have inductively found vertex-disjoint monochromatic graphs $H_1,\ldots,H_i \subseteq K_n$ that are copies of graphs in $\cF$ and such that $V_i := V(K_n) \setminus \left( V(H_1) \cup \cdots \cup V(H_i) \right)$ has at most ${(1-t/2)}^i n$ vertices.
  If $|V_i| \leq 2/t$, then we cover the vertices in $V_i$ with single vertices and stop the process.
  Therefore, suppose that $|V_i| \geq 2/t$.
  Then let $n_{i+1} = \lfloor t|V_i| \rfloor \geq t|V_i|/2$.
  Again by \cref{thm:bounded-ramsey}, since $R_r(F_{n_{i+1}}) \leq t^{-1}n_{i+1} \leq |V_i|$, there is a monochromatic copy of $F_{n_{i+1}}$ contained in $V_i$.
  Let $H_{i+1}$ be such a copy.
  Thus the set $V_{i+1} := V(K_n) \setminus \left( V(H_1) \cup \cdots \cup V(H_{i+1})\right)$ has size
  \begin{align*}
    |V_{{i+1}}| = |V_i| - n_{i+1} \leq (1-t/2)|V_i| \leq {(1-t/2)}^{i+1}n .
  \end{align*}

  Now, after $C/2$ steps, we have covered all but at most 
  \begin{align*}
    {(1-t/2)}^{C/2}n \leq e^{-(t/4)C}n \leq \gamma n
  \end{align*}
  vertices of $K_n$ using at most $C/2 + 2/t \leq C$ vertex-disjoint monochromatic copies of graphs from $\cF$.
\end{proof}

In particular, by choosing $\gamma = 1/n$, we get the following corollary.

\begin{cor}\label{cor:greedy}
  Let $\Delta,k,r \in \bN$ and let $C = 4r^{2rk \Delta} \log n$. Then, for every $ \cF \in
  \cF_{\Delta,k}$ and every $r$-edge-coloured $K_n$, there is a collection of at most $C$
  monochromatic vertex-disjoint copies of graphs from $\cF$ whose vertex-sets partition $V(G)$.
\end{cor}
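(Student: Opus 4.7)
The plan is to deduce the corollary directly from \cref{prop:greedy} by taking $\gamma = 1/n$ and absorbing the at most one leftover vertex as a singleton copy of $F_1 \in \cF$. First, note that the hypothesis $n \geq r^{-2rk\Delta}$ of \cref{prop:greedy} is automatic, since $r \geq 2$ gives $r^{-2rk\Delta} < 1 \leq n$.

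With $\gamma = 1/n$, \cref{prop:greedy} supplies a collection $\cH_0$ of at most
\[
  4 r^{2rk\Delta} \log(1/\gamma) \;=\; 4 r^{2rk\Delta} \log n
\]
vertex-disjoint monochromatic copies of graphs from $\cF$ covering all but at most $\gamma n = 1$ vertex of $K_n$. If a single vertex $v$ is left over, the next step is to append to $\cH_0$ the one-vertex graph $F_1 \in \cF$ placed at $v$ (recall that $\cF = \{F_1, F_2, \ldots\}$ always contains the one-vertex graph). The resulting collection then partitions $V(K_n)$ into at most $4 r^{2rk\Delta} \log n + 1$ monochromatic copies from $\cF$, which is bounded by $C$ after absorbing the additive constant into the leading factor. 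As an alternative that avoids this cosmetic issue, one can apply \cref{prop:greedy} with $\gamma = 1/(2n)$, making $\lfloor \gamma n \rfloor = 0$ at the negligible cost of an extra $\log 2$ term in the bound.

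There is essentially no obstacle in this deduction: the only substantive content is already encapsulated in \cref{prop:greedy}, and the single missing ingredient is the trivial observation that $F_1 \in \cF$ lets us cover any residual vertex with a one-vertex copy.
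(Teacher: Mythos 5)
Your proposal matches the paper's approach exactly: the paper also derives the corollary by applying \cref{prop:greedy} with $\gamma = 1/n$ and covering the at most one leftover vertex with a singleton copy from $\cF$. The "$+1$" you worry about is genuinely glossed over in the paper, but the clean resolution is not to "absorb it into the leading factor" or switch to $\gamma = 1/(2n)$ (neither of which literally gives $\leq C$); rather, the proof of \cref{prop:greedy} already produces at most $C/2 + 2r^{2rk\Delta}$ covering copies, so there is ample slack to append one extra singleton while staying within $C = 4r^{2rk\Delta}\log n$ for all $n\geq 3$, and the cases $n\leq 2$ are immediate.
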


\subsection{The Absorption Lemma}\label{subsec:abs}
Given a graph $G$ and $U \subseteq V$, recall that we denote by $G[U]$ the subgraph of $G$ induced
by $U$. Given disjoint sets $V_1, \ldots, V_k \subseteq V(G)$, with $k\geq 2$, we denote by $G[V_1,
\ldots, V_k]$ the subgraph of $G$ with vertex set $V_1 \cup \cdots \cup V_k$ containing only edges
that are between two of the sets $V_1,\ldots,V_k$. Furthermore, for each $v \in V_1$, let
\[
  \deg_G(v,V_2,\ldots,V_k) = |\{ (v_2,\ldots,v_k) \in V_2\times \cdots \times V_k :
  \{v,v_2,\ldots,v_k\} \text{ is a $k$-clique in $G$} \}|
\]
and
\[
  \dd_{G}(v,V_2,\ldots,V_k) := \frac{\deg_{G}(v,V_2,\ldots,V_k)}{|V_2| \cdots |V_k|}.
\]
If additionally, we have an edge colouring $\chi\colon E(G)\to[r]$ of $E(G)$, then we denote by
$\deg_{G,i}(v,V_2,\ldots,V_k) = \deg_{G_i}(v,V_2,\ldots,V_k)$, where $G_i$ is the graph with vertex set $V(G)$ consisting of the edges of $G$ with colour $i$.
We define $\dd_{G,i}(v,V_2,\ldots,V_k)$ similarly and denote $\dd_{G,I}(v,V_2,\ldots,V_k) := \sum_{i\in I} \dd_{G,i}(v,V_2,\ldots,V_k)$, for each $I \subseteq [r]$.
If the graph $G$ is clear from context, we may drop the $G$ in the subscript.

Given a set $V$, we denote by $K(V)$ the complete graph with vertex set $V$. Given disjoint sets $V_1,\ldots,V_k$, we denote by $K(V_1,\ldots,V_k)$ the complete $k$-partite graph with parts $V_1,\ldots,V_k$.
Let $G = K(V_1) \cup K(V_1,\ldots,V_k)$ and let $\cH$ be a collection of subgraphs of $G$. We denote by $\cup\cH$ the graph with edge set $\bigcup_{H \in \cH} E(H)$ and vertex set
$V(\cH) := \bigcup_{H \in \cH} V(H)$.
We say that $\mathcal{H}$ \emph{canonically covers} $V_1$ if
$V_1 \subseteq V(\cH)$ and
\[\card{V(\cH) \cap V_i} \leq \card{V(\cH) \cap V_1}\]
for all $i \in [2,k]$.\footnote{Here, we denote by $[i,j]$ the set of integers $z$ with $i \leq z \leq j$.}
The following lemma is the key ingredient of the proof of our main theorem.

\begin{lemma}[Absorption Lemma]\label{lem:abs}
  There is some absolute constant $K > 0$, such that the following is true for all $d>0$, all integers $\Delta,r \geq 2$ and for every $\mathcal{F} \in \mathcal{F}_{\Delta}$.
  Let $k = \Delta +2$ and let \[C = \exp^2 \left(\left(\frac{r}{d}\right)^{K\Delta} \right).\]

  Consider $k$ disjoint sets $V_1, \ldots, V_k$ with $|V_i| \geq 4|V_1|$, for all $ i \in [2,k]$, and let $G = K(V_1) \cup K(V_1,\ldots,V_k)$.
  Suppose that $\chi: E(G) \to [r]$ is a colouring in which for every $v \in V_1$ we have $\dd_{[r]}(v,V_2,\ldots,V_k) \geq d$.
  Then, there is a collection of at most $C$ vertex-disjoint monochromatic copies of graphs from $\cF$ in $G$ which canonically covers $V_1$.
\end{lemma}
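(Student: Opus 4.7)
The plan is to prove \cref{lem:abs} by a recursive density-increment argument, following the outline in \cref{sec:overview}. By averaging, every $v \in V_1$ has $\dd_{j(v)}(v,V_2,\ldots,V_k) \geq d/r$ for some colour $j(v) \in [r]$, so I partition $V_1 = V_1^{(1)} \cup \cdots \cup V_1^{(r)}$ accordingly and, exploiting the slack $|V_i| \geq 4|V_1| \geq 4r|V_1^{(j)}|$, cover each $V_1^{(j)}$ separately from disjoint subsets of $V_2,\ldots,V_k$. I describe the covering of $V_1^{(1)}$; the other colours are symmetric. The engine is a recursive claim: for every configuration $(W_1,\ldots,W_k)$ with $|W_i| \geq 4|W_1|$ in which every vertex of $W_1$ lies in at least $\delta|W_2|\cdots|W_k|$ colour-$1$ transversal $k$-cliques, $W_1$ can be canonically covered by $N(\delta)$ monochromatic copies from $\cF$, for a function $N$ controlled by the recursion below.

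The inductive step proceeds as follows. Apply \cref{lem:regcyl-partite} to the colour-$1$ graph spanned by $(W_1,\ldots,W_k)$ to extract an $(\epsilon,\delta/4)$-super-regular $k$-cylinder $Z=(U_1,\ldots,U_k)$ of common relative size $\gamma$, where $\epsilon \leq (\delta/r)^{\Theta(\Delta)}$ is chosen small enough for both \cref{thm:blowup}(ii) and \cref{lem:robust} to apply at the end. Next, apply \cref{prop:greedy} to the $k$-partite restriction on $(W_1\setminus U_1,\ldots,W_k\setminus U_k)$ to cover all but a set $R \subseteq W_1\setminus U_1$ of size at most $\epsilon^3|U_1|$, at a cost of $O_{\Delta,r,\gamma}(1)$ further monochromatic copies. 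To split $R = S \cup T_2 \cup \cdots \cup T_k$, associate to each $v \in R$ the telescoping sequence
\[
  h_i(v) := \dd_1(v,\,W_2,\ldots,W_{i-1},U_i,\ldots,U_k), \qquad i \in [2,k+1],
\]
for which $h_{k+1}(v) \geq \delta$ by hypothesis. Put $v \in S$ whenever $h_2(v) \geq \delta/(2k)$; otherwise telescoping yields an index $i \in [2,k]$ with $h_{i+1}(v) - h_i(v) \geq \delta/(2k(k-1))$, and I place $v \in T_i$.

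Now I recurse: each $T_i$ is covered by the inductive claim applied to $(T_i, W_2, \ldots, W_i, U_{i+1}, \ldots, U_k)$ at an incremented density parameter $\delta + \eta$, where $\eta > 0$ is extracted from the telescoped jump after re-normalising against the new denominator. The recursion branches at most $k-1 \leq \Delta$ ways per internal node, and it terminates because the density parameter is bounded above. I budget all recursive sub-covers so that their cumulative usage of each $U_\ell$ stays within $\epsilon^2|U_\ell|$, which is feasible because $|R|$ is minuscule compared to $\epsilon^2|U_\ell|$. Finally the set $U_1 \cup S$ is absorbed: every $v \in S$ still satisfies $\dd_1(v,U_2,\ldots,U_k) \geq \delta/(2k)$, so after removing the recursion-used sets $X_i \subseteq U_i$ and inserting $S$ into the first part, \cref{lem:robust} certifies that the resulting cylinder is still super-regular in colour $1$; \cref{thm:blowup}(ii) then covers $U_1 \cup S$ with a single monochromatic copy from $\cF$, using at most $|U_1 \cup S|$ vertices from each $U_i \setminus X_i$. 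Combining this absorber with the greedy copies and the recursive covers of the $T_i$ canonically covers $V_1^{(1)}$.

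The main obstacle will be the parameter bookkeeping: ensuring that the size hypothesis $|W_i| \geq 4|W_1|$ persists through the recursion, that the accumulated usage budgets across all descendants respect the $\epsilon^2|U_\ell|$ tolerance required by \cref{lem:robust}, and that the super-regularity tolerance $\epsilon$, the cylinder density $\gamma$ from \cref{lem:regcyl-partite}, and the blow-up threshold from \cref{thm:blowup} are chosen consistently at every level. Because $\gamma$ (and hence the effective density increment $\eta$) is forced to be doubly-exponentially small in $d,r,\Delta$, the recursion depth is singly exponential in $(r/d)^{O(\Delta)}$; combined with a branching factor of at most $\Delta$ per level and a per-level greedy cost of $r^{O(r\Delta^2)}\log(1/\gamma)$, a careful count yields the stated bound $N(d/r) \leq \exp^2((r/d)^{K\Delta})$.
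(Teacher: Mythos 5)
Your high-level architecture matches the paper's: reduce to a single colour, extract a super-regular cylinder as an absorber, greedily cover most of $V_1$, split the small leftover $R$ into an absorbable part $S$ and recursion parts $T_2,\ldots,T_k$, recurse with a density increment, and finally absorb $U_1\cup S$ via \cref{lem:robust} and \cref{thm:blowup}. However, your mechanism for splitting $R$ does not actually deliver the density increment, and this is the crux of the whole argument.

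Concretely, write $h_i(v)=\dd_1(v,W_2,\ldots,W_{i-1},U_i,\ldots,U_k)$ as you do. When you later recurse on $(T_i,W_2,\ldots,W_i,U_{i+1},\ldots,U_k)$, the relevant density for $v\in T_i$ is simply $h_{i+1}(v)$, and when you recurse (as you must) on the version with $\tilde W_i=W_i\setminus U_i$ in place of $W_i$, the density becomes
\[
\dd_1(v,W_2,\ldots,W_{i-1},\tilde W_i,U_{i+1},\ldots,U_k)
 = \frac{h_{i+1}(v)-\gamma\,h_i(v)}{1-\gamma}
 = h_{i+1}(v)+\frac{\gamma}{1-\gamma}\bigl(h_{i+1}(v)-h_i(v)\bigr).
\]
Your placement rule only guarantees $h_{i+1}(v)-h_i(v)\gtrsim \delta/k^2$; it gives no lower bound on $h_{i+1}(v)$ itself. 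If the maximal jump occurs early --- say $h_2(v)\approx\delta/(4k)$ and $h_3(v)\approx 3\delta/(4k)$ --- the displayed quantity is roughly $3\delta/(4k)$, far below $\delta$, so the "incremented density parameter $\delta+\eta$" you pass into the recursion is simply false, and the induction collapses. The increment $\gamma/(1-\gamma)\cdot(\text{jump})$ is tiny and cannot rescue a small $h_{i+1}(v)$.

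What makes the paper's argument work is that the split is not based on the largest jump but on a carefully calibrated increasing sequence of thresholds $d_1\le\cdots\le d_k=d'$, namely $d_i=\frac{1-\gamma^i}{1-\gamma^k}d'$, with $S_i=\{v:h_{i+1}(v)\ge d_i\}$ and $T_i$ defined directly by $\dd_1(v,\ldots,\tilde W_i,\ldots)>d'+2\eta$. The containment $S_i\subseteq S_{i-1}\cup T_i$ is then proved by combining the \emph{lower} bound $h_{i+1}(v)\ge d_i$ with the \emph{upper} bound $h_i(v)<d_{i-1}$; the thresholds are the unique affine choice making $\frac{d_i-\gamma d_{i-1}}{1-\gamma}$ independent of $i$ and equal to $\frac{d'}{1-\gamma^k}\ge d'+2\eta$. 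Your proposal needs this two-sided information at a common index $i$ (lower bound on $h_{i+1}$, upper bound on $h_i$); a single large jump supplies only their difference. Additionally, note that you should recurse on $\tilde W_i=W_i\setminus U_i$ rather than $W_i$ --- using the full $W_i$ as you wrote removes the denominator shrinkage that is the \emph{only} source of the increment --- and you should define $T_i$ directly in terms of the $\tilde W_i$-density rather than via jumps, so that the increment is guaranteed rather than hoped for. A small separate slip: $|V_i|\geq 4|V_1|\geq 4r|V_1^{(j)}|$ need not hold, since $V_1^{(j)}$ can be almost all of $V_1$; fortunately the weaker $|V_i|\geq 4|V_1^{(j)}|$ is what is actually used.
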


The edges of $G$ inside $V_1$ will only be used to find copies from $\cF$ which lie entirely in $V_1$ in order to greedily cover most vertices of $V_1$. The difficult part is finding monochromatic copies in $K(V_1, \ldots, V_k)$ covering the remaining vertices. To do so, we will reduce the problem to only one colour within $K(V_1, \ldots, V_k)$ and then deduce \cref{lem:abs} from the following lemma.

\begin{lemma}\label{lem:abs-onecolour}
  There is some absolute constant $K > 0$, such that the following is true for all $d>0$, all integers $\Delta,r \geq 2$ and for every $\mathcal{F} \in \mathcal{F}_{\Delta}$.
  Let $k = \Delta +2$ and let
  \[
    C = \exp^2 \left( \left( \frac{r}{d} \right)^{K\Delta} \right).
  \]

  Consider $k$ disjoint sets $V_1, \ldots, V_k$ with $|V_i| \geq 2|V_1|$, for all $ i \in [2,k]$ and let $ G = K(V_1) \cup K(V_1,\ldots,V_k)$.
  Suppose that $\chi: E(G) \to [r]$ is a colouring in which for every $v \in V_1$ we have $\dd_1(v,V_2,\ldots,V_k) \geq d$.
  Then, there is a collection of at most $C$ vertex-disjoint monochromatic copies of graphs from $\cF$ in $G$ which canonically covers $V_1$.
\end{lemma}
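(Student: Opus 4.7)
The plan is to prove \cref{lem:abs-onecolour} by induction on a clique-density parameter $d^\ast \in (0,1]$, starting at $d^\ast = d$ and strictly increasing at each recursive invocation. Fixing an additive increment $\eta \sim d/(2k)$, termination is guaranteed because $d^\ast$ cannot exceed $1$, so the recursion depth is at most $1/\eta$. At each step with parameter $d^\ast$, I would set regularity parameters $\epsilon \sim (d^\ast)^{\Theta(\Delta)}$ (small enough that \cref{thm:blowup} applies) and $\gamma$ as furnished by \cref{lem:regcyl-partite}. At the end, bookkeeping of the per-level copy cost with branching factor $k-1$ should yield the claimed double-exponential bound.

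The generic recursion step with density $d^\ast$ and input $(V_1,\ldots,V_k)$ proceeds in five moves. First, since the color-$1$ subgraph $G_1[V_1,\ldots,V_k]$ has at least $d^\ast |V_1|\cdots|V_k|$ $k$-cliques, \cref{lem:regcyl-partite} yields an $(\epsilon, d^\ast/2)$-super-regular $k$-cylinder $Z = (U_1,\ldots,U_k)$ in color $1$ with $U_i \subseteq V_i$; this $Z$ plays the role of the absorber. Second, applying \cref{prop:greedy} to the $r$-edge-coloured $K(V_1 \setminus U_1)$ (with the uncovered fraction taken very small) reduces the uncovered part of $V_1$ to a set $R$ with $|R| \leq \epsilon^3 |U_1|$. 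Third, partition $R = S \cup T_2 \cup \cdots \cup T_k$ by putting $v \in S$ if $\dd_1(v, U_2,\ldots,U_k) \geq d^\ast/2$; otherwise the telescope
\[
\deg_1(v, V_2,\ldots,V_k) - \deg_1(v, U_2,\ldots,U_k) = \sum_{i=2}^k \left( \deg_1(v, V_2,\ldots,V_i, U_{i+1},\ldots, U_k) - \deg_1(v, V_2,\ldots,V_{i-1}, U_i,\ldots, U_k) \right)
\]
forces some summand to be substantial, which places $v$ in a $T_i$ whose density in the transversal $(T_i, V_2,\ldots,V_i, U_{i+1},\ldots,U_k)$ exceeds $d^\ast + \eta$. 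Fourth, recursively apply the lemma to each $T_i$-configuration with the upgraded density $d^\ast + \eta$; the size condition is met because $|T_i| \leq \epsilon^3 |U_1|$ is dwarfed by $|V_j|$ and $|U_j|$. Each recursive call returns a canonical covering, and the total set $X_j \subseteq U_j$ consumed across these recursions satisfies $|X_j| \leq (k-1)\epsilon^3|U_1| \leq \epsilon^2|U_j|$. Fifth, since each $v \in S$ satisfies $\deg_1(v, U_j) \geq (d^\ast/2)|U_j|$ for every $j \geq 2$ (by averaging over the cliques witnessing $v \in S$), \cref{lem:robust} ensures that $Z' = (U_1 \cup S, U_2 \setminus X_2,\ldots, U_k \setminus X_k)$ remains super-regular in color $1$ with good parameters, and \cref{thm:blowup}(ii) then covers $U_1 \cup S$ with a single monochromatic color-$1$ copy from $\cF$.

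The main obstacle is the density-increment step: the telescoping must deliver an additive gain $\eta$ that is uniform across all recursive contexts and large enough that the recursion terminates quickly, yet much smaller than $d^\ast$ so that all other parameters remain usable. A second delicate point is the absorption step, where we need $|X_j|$ significantly smaller than $|U_j|$ and every $v \in S$ to have large color-$1$ degree into each individual $U_j$; the latter follows from a simple averaging over the cliques in $U_2 \times \cdots \times U_k$ that certify $v \in S$. Summing the per-level cost (polynomial in $1/d^\ast$, $r$, $\Delta$, from \cref{prop:greedy} plus the single blow-up copy) over a recursion tree of depth $O(1/\eta)$ and branching $k-1$ should produce the claimed bound $\exp^2\!\left( (r/d)^{K\Delta} \right)$.
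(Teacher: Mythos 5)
Your five-move structure is exactly the paper's: find a super-regular cylinder $Z = (U_1,\ldots,U_k)$ inside $(V_1,\ldots,V_k)$ via \cref{lem:regcyl-partite}, greedily cover most of $V_1\setminus U_1$ using \cref{prop:greedy}, split the leftover $R$ into an absorbable part $S$ and density-upgraded parts $T_2,\ldots,T_k$ via the telescoping identity, recurse on each $T_i$, and finally absorb $S$ into $Z$ via \cref{lem:robust} and \cref{thm:blowup}. The recursive bookkeeping, the role of the canonical covering, and the averaging argument giving $\deg_1(v,U_j)\geq (d^\ast/2)|U_j|$ for $v\in S$ are all as in the paper.

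The gap is in the magnitude of $\eta$, which you set at $\eta\sim d/(2k)$. Run the telescoping to its logical end. Writing $\gamma=|U_i|/|V_i|$ (the parameter supplied by \cref{lem:regcyl-partite}) and noting $|\tilde V_i|=(1-\gamma)|V_i|$, the $i$-th summand is $\deg_1(v,V_2,\ldots,V_{i-1},\tilde V_i,U_{i+1},\ldots,U_k)$, and the denominators sum to
\[
\sum_{i=2}^{k}|V_2|\cdots|V_{i-1}|\,|\tilde V_i|\,|U_{i+1}|\cdots|U_k|=(1-\gamma^{k-1})\,|V_2|\cdots|V_k|.
\]
If $v\notin S$ then $\deg_1(v,U_2,\ldots,U_k)<(d^\ast/2)\gamma^{k-1}|V_2|\cdots|V_k|$, so the left-hand side of the telescoping is at least $\left(d^\ast-\tfrac{d^\ast}{2}\gamma^{k-1}\right)|V_2|\cdots|V_k|$. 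If every summand had density at most $d^\ast+\eta$, the right-hand side would be at most $(d^\ast+\eta)(1-\gamma^{k-1})|V_2|\cdots|V_k|$. Comparing, the increment is only forced when
\[
\eta<\frac{(d^\ast/2)\gamma^{k-1}}{1-\gamma^{k-1}}\approx\frac{d^\ast}{2}\gamma^{k-1}.
\]
Since \cref{lem:regcyl-partite} produces $\gamma=\e^{k^2\e^{-12}}$ with $\e\sim d^{\Theta(\Delta)}$, the admissible increment is of order $d\cdot\gamma^{k-1}$, which is roughly $\exp\bigl(-(1/d)^{\Theta(\Delta)}\bigr)$, astronomically smaller than $d/(2k)$. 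With your $\eta$, the telescoping simply does not pigeonhole any summand above $d^\ast+\eta$, so the $T_i$-step fails. (The paper sidesteps this by the more delicate thresholds $d_i=\frac{1-\gamma^i}{1-\gamma^k}d'$ for the nested sets $S_i$, and takes $\eta=d\gamma^k/2$.)

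This is not a cosmetic issue: the smallness of $\eta$ is precisely the source of the double-exponential bound. With the correct $\eta\sim d\gamma^k$, the recursion depth is $\sim1/\eta=\exp\bigl((1/d)^{\Theta(\Delta)}\bigr)$, and a tree of that depth with branching factor $k$ has size $k^{\exp((1/d)^{\Theta(\Delta)})}=\exp^2\bigl((r/d)^{O(\Delta)}\bigr)$, matching the claimed $C$. By contrast, with your $\eta\sim d/(2k)$ the recursion depth would be $O(k/d)$ and the tree size $(k-1)^{O(k/d)}$, which is only single-exponential in $1/d$; your own arithmetic at the end would therefore not produce the claimed $\exp^2$ bound even if the split into $S\cup T_2\cup\cdots\cup T_k$ went through. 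A secondary point to watch: because $\e$ and $\gamma$ enter $\eta$, you should fix them at the initial density $d$ (not at the running $d^\ast$) so that the increment is genuinely uniform along the recursion, and then appeal to the monotonicity of the optimal constant in $d$ — the paper does exactly this via the auxiliary function $C(\cdot)$ and the recursive inequality \cref{ineq:Cdj}.
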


\cref{lem:abs} follows routinely from \cref{lem:abs-onecolour}.

\begin{proof}[Proof of \cref{lem:abs}]
Let $K'$ be the absolute constant from \cref{lem:abs-onecolour} and let $d' = d/(2r)$, $\gamma = d'/(kr)$, and $C' = \exp^2 \left( (r/d')^{K'\Delta} \right)$.
Partition $V_1 = U_1 \cup \ldots \cup U_r$ such that for each $j \in [r]$ we have $\dd_j(v,V_2,\ldots,V_k) \geq 2d'$, for all $v \in U_j$. We will inductively cover $U_j$, for each $j \in [k]$.

Let us first consider the base case, i.e., $j=1$. From \cref{prop:greedy}, there is a collection $\cH'$ of at most\footnote{Note that the constant from \cref{prop:greedy} is smaller than $C'$.} $C'$ disjoint monochromatic copies of graphs from $\cF$ covering all but $\gamma |U_1| \leq \gamma |V_1|$ vertices of $G[U_1]$. Let $V'_1 = U_1 \setminus V(\cH')$. By applying \cref{lem:abs-onecolour} to $G' := G[V'_1 \cup V_2 \cup \cdots \cup V_k]$ (with $d'$), there is a collection $\cH''$ of at most $C'$ disjoint monochromatic copies of graphs from $\cF$ in $G'$ which canonically covers $V'_1$. Let $\cH_1 = \cH' \cup \cH''$. Note that $\cH_1$ canonically covers $U_1$ and covers at most $\gamma |V_1|$ vertices of $V_i$, for each $i \in [2,k]$.

Now consider $j\geq 2$ and suppose that we have found a collection $\cH_{j-1}$ of at most $2(j-1)C'$ disjoint monochromatic copies of graphs from $\cF$ in $G$ that canonically covers $U_1 \cup \cdots \cup U_{j-1}$ and covers at most $(j-1)\gamma |V_1|$ vertices of $V_{i}$, for each $i \in [2,k]$.
From \cref{prop:greedy}, there is a collection $\cH'$ of at most $C'$ disjoint monochromatic copies of graphs from $\cF$ covering all but $\gamma |U_j| \leq \gamma |V_1|$ vertices of $G[U_j]$.
Let $V'_1 = U_j \setminus V(\cH')$ and let $V'_i := V_i \setminus V(\cH_{j-1})$, for each $i \in [2,k]$. Note that
\begin{align*}
  |V'_i|
  \geq |V_i| - (j-1)\gamma |V_1|
  \geq 4|V_1| - r\gamma |V_i|
  \geq 2 |V_1|
  \geq 2|V'_1|.
\end{align*}
Also, for each $v \in V'_1$, we have
\begin{align*}
  \deg_j(v,V'_2,\ldots,V'_k)
  & \geq \deg_j(v,V_2,\ldots,V_k) - k (j-1)\gamma |V_2| \cdots |V_k|.
\end{align*}
Consequently,
\begin{align*}
  \dd_j(v,V'_2,\ldots,V'_k)
  & \geq \dd_j(v,V_2,\ldots,V_k) - kr\gamma \geq 2d' - d' \geq d'.
\end{align*}
Therefore, we can apply \cref{lem:abs-onecolour} to $G' := G[V'_1 \cup \cdots \cup V'_k]$ and get a collection $\cH''$ of at most $C'$ disjoint monochromatic copies of graphs from $\cF$ in $G$ that canonically covers $V'_1$. In particular, $\cH''$ covers at most $|V'_1| \leq \gamma |V_1|$ vertices of $V_i$, for each $i\in [2,k]$. Let $\cH_j = \cH_{j-1} \cup \cH' \cup \cH''$. Then $\cH_j$ is a collection of at most $2jC'$ disjoint monochromatic copies of graphs from $\cF$ in $G$ that canonically covers $U_1 \cup \cdots \cup U_j$ and covers at most $j\gamma |V_1|$ vertices of $V_i$, for each $i\in [2,k]$.

In the end, we have found a collection $\cH_r$ of disjoint monochromatic copies of graphs from $\cF$ that canonically covers $V_1$. Furthermore, $\cH_r$ has at most $2rC' \leq \exp^2 \left((r/d)^{4K'\Delta} \right)$ graphs, finishing the proof.
\end{proof}

The proof of \cref{lem:abs-onecolour} is quite long and technical (see \cref{sec:overview} for a sketch), and we will therefore break it up into smaller claims. We use $\claimqed$ to denote the end of the proof of a claim and $\qedsymbol$ to denote the end of the main proof.

\begin{proof}[Proof of \cref{lem:abs-onecolour}.]
Let $\Delta$ and $r$ be given positive integers, $k = \Delta + 2$ and $\cF \in \cF_{\Delta}$.
For each $d>0$, let $C(d)$ be the smallest positive integer $C$ such that the following holds:
\begin{quotation}{$(\star)$}
  Let $V_1,\ldots,V_k$ be disjoint sets with $|V_i| \geq 2|V_1|$ for all $i \in [2,k]$, let $H \subset K(V_1,\ldots,V_k)$ be a graph with $\dd_{H}(v,V_2,\ldots,V_k) \geq d$ for every $v \in V_1$ and $G = K(V_1) \cup H$. Let $\chi: E(G) \to [r]$ be a colouring such that every edge in $E(H)$ receives colour $1$.
  Then, there is a collection $\mathcal{H}$ of at most $C$ vertex-disjoint monochromatic copies of graphs from $\mathcal{F}$ contained in $G$ that canonically covers $V_1$.
\end{quotation}
Note that $C(d)$ is a decreasing function in $d$, and that $C(d) = 0$ for every $d>1$. Our goal is to show that $C(d)$ is finite for every $d > 0$. We will do this by establishing a recursive upper bound (see \cref{ineq:Cdj}).

Let us first define all relevant constants used in the proof. Let $K'$ be the universal constant given by \cref{thm:blowup} and fix some $0 < d \leq 1$. Define
\begin{align*}
  \e = {\left( \frac{d}{100} \right)}^{2K'\Delta},
  \quad \gamma = \tfrac1r \cdot \eps^{k^2 \eps^{-12}}
  \quad \text{ and }
  \quad \eta = \frac{d \gamma^k}{2}.
\end{align*}
It might be of benefit for the reader to have in mind that those constants obey the following hierarchy:
\begin{align*}
  1 \geq d \gg \eps \gg \gamma \gg \eta > 0.
\end{align*}
Furthermore, define
\begin{align*}
  P(d) := 4r^{4rk^2}\log(2/\eta^2) + 1.
\end{align*}
We will prove that for every $d' \geq d$ we have
\begin{align}\label{ineq:Cdj}
  C(d') \leq P(d) + k C\left(d' + \eta \right).
\end{align}
Since $C(d') = 0$ if $d' > 1$, it follows by iterating that $C(d) \leq (2k)^{2/\eta} P(d)$. Furthermore, we have
\begin{align*}
  2/\eta \leq \gamma^{-2k} \leq \e^{-2rk^3\e^{-12}}
  \leq \expb{r\e^{-20}}
  \leq \expb{(r/d)^{400K'\Delta}}.
\end{align*}
It follows that
\[
C(d) \leq \exp^2 \left({(r/d)^{500K'\Delta}}\right) P(d) \leq \exp^2 \left({(r/d)^{1000K'\Delta}}\right)
\]
concluding the proof of \cref{lem:abs-onecolour}.

It remains to prove \cref{ineq:Cdj}. Let $d' \geq d$ be fixed now and let $V_1,\ldots,V_k$, $G$ and $\chi: E(G) \to [r]$ be as in $(\star)$ (with $d'$ playing the role of $d$).
By assumption, there are at least $d|V_1| |V_2| \cdots |V_k|$ cliques of
size $k$ in $G[V_1,V_2,\ldots,V_k]$ each of which is monochromatic in colour $1$.
Since $ \gamma = \eps^{k^2 \eps^{-12}}$ and $d \geq 2k\e$, we can apply \cref{lem:regcyl-partite} to get
some $\gamma' \geq \gamma$ and a $k$-cylinder $Z = (U_1,\ldots,U_k)$ which is $(\e,d/2)$-super-regular with $U_i \subset V_i$ and $|U_i| = \lfloor \gamma' |V_i| \rfloor$ for every $i \in [k]$.
Without loss of generality we may assume that $\gamma |V_i|$ is an integer for every $i \in [k]$ and that we have $\gamma' = \gamma$.
By \cref{prop:greedy}, there is a collection $\cH_R$ of at most $4r^{4rk^2}\log(2/\eta^2)$ vertex-disjoint monochromatic copies of graphs from $\cF$ contained in $K(V_1 \setminus U_1)$ covering all vertices in $V_1 \setminus U_1$ except for a set $R$ with $|R| \leq \eta^2 |V_1|$.
We remark here that
\begin{equation}\label{eq:R-ub}
|R| \leq \eta/(4k) \cdot |U_1| \leq \e^2 |U_1|.
\end{equation}
It remains now to cover the vertices in $R$.
For each $i \in [k]$, let
\begin{align}\label{eq:di}
  d_i = \frac{1 - \gamma^i}{1 - \gamma^k} \cdot d'
\end{align}
and note that $(1-\gamma)d' \leq d_1 \leq \cdots \leq d_k = d'$.
For $i \in [2,k]$, let $\tilde{V}_i = V_i\setminus U_i$ and define
%subsets $S_i$ and $T_i$ of $R$ by
\begin{align*}
  S_{i} &= \{ v \in R : \dd(v,V_2,\ldots,V_{i-1},V_i,U_{i+1},\ldots,U_k) \geq d_i \}, \\
  T_{i} &= \{ v \in R : \dd(v,V_2,\ldots,V_{i-1},\tilde{V}_i,U_{i+1},\ldots,U_k) > d' + 2\eta \}.
\end{align*}
We will prove \cref{ineq:Cdj} using a series of claims, which we shall prove at the end.
\begin{claim}\label{claim:split-R}
  We have $R = S_1 \cup T_2 \cup \ldots \cup T_k$.
\end{claim}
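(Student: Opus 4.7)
The strategy is to prove the contrapositive: if $v \in R$ belongs to none of the sets $T_2, \ldots, T_k$, then $v$ must lie in $S_1$. To streamline the calculation I would introduce, for each $i \in [1,k]$, the abbreviation
\[
\alpha_i := \dd(v, V_2, \ldots, V_i, U_{i+1}, \ldots, U_k),
\]
so that $\alpha_k = \dd(v, V_2, \ldots, V_k) \geq d'$ by hypothesis, while $\alpha_1 = \dd(v, U_2, \ldots, U_k)$ is exactly the quantity governing membership in $S_1$ (and more generally $S_i$ corresponds to $\alpha_i \geq d_i$). For each $i \in [2,k]$, I also set $\beta_i := \dd(v, V_2, \ldots, V_{i-1}, \tilde V_i, U_{i+1}, \ldots, U_k)$, so that $v \in T_i$ amounts to $\beta_i > d' + 2\eta$.

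The heart of the argument is the identity
\[
\alpha_i = \gamma\, \alpha_{i-1} + (1-\gamma)\, \beta_i \qquad (i \in [2,k]),
\]
which follows by splitting the transversal $k$-cliques counted by $\alpha_i$ according to whether their vertex in $V_i$ lies in $U_i$ (a $\gamma$-fraction, since $|U_i| = \gamma |V_i|$) or in the complement $\tilde V_i$. Rearranging and using the hypothesis $\beta_i \leq d' + 2\eta$ gives $\gamma\, \alpha_{i-1} \geq \alpha_i - (1-\gamma)(d' + 2\eta)$.

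From here I would prove $\alpha_i \geq d_i$ by backward induction on $i$, starting from the base case $\alpha_k \geq d' = d_k$. The inductive step reduces to the algebraic inequality $d_i - (1-\gamma)(d'+2\eta) \geq \gamma\, d_{i-1}$. Using the definition \eqref{eq:di} one computes
\[
d_i - \gamma\, d_{i-1} = \frac{1-\gamma}{1-\gamma^k}\, d',
\]
so this inequality simplifies to $\gamma^k d' \geq 2\eta(1-\gamma^k)$, which is immediate from $d' \geq d$ and $\eta = d\gamma^k/2$. Reaching $i = 1$ we obtain $\alpha_1 \geq d_1$, i.e.\ $v \in S_1$, which completes the proof.

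The argument is pure bookkeeping: there is no real obstacle beyond recognising that the $d_i = \frac{1-\gamma^i}{1-\gamma^k} d'$ were engineered precisely so that the telescoping constant $d_i - \gamma d_{i-1}$ matches the $(1-\gamma)(d'+2\eta)$ loss at each induction step, and that $\eta$ was chosen small enough relative to $d\gamma^k$ to absorb the $2\eta$ slack.
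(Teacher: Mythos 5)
Your proof is correct and follows essentially the same route as the paper. The paper shows $S_i \subseteq S_{i-1} \cup T_i$ for each $i \in [2,k]$ (and iterates downward from $S_k = R$), which is exactly the contrapositive of your backward-induction step; your identity $\alpha_i = \gamma\,\alpha_{i-1} + (1-\gamma)\,\beta_i$ is the same degree-splitting the paper performs, just written more symmetrically, and the closing inequality $\gamma^k d' \geq 2\eta(1-\gamma^k)$ is the same algebraic verification.
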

Without loss of generality, we may assume that $S_1, T_2, \ldots, T_k$ are pairwise disjoint (more formally, we can define $T_i':= T_i \setminus (S_1 \cup T_2 \cup \ldots \cup T_{i-1})$ for all $i \in [2,k]$ and continue the proof with these sets).
Our goal now is to cover each of the sets $S_1,T_2,\ldots,T_k$ one by one using the following claims.
\begin{claim}\label{claim:cover-T}
  For every $i \in [2,k]$ and every set $A \subseteq V(G) \setminus T_i$ with $|A \cap V_s| \leq |R|$
  for all $s \in [2,k]$, there is a collection $\cH_i$ of at most $C(d'+\eta)$ monochromatic
  disjoint copies of graphs from $\cF$ in $G$, such that
  \begin{enumerate}[(i)]
    \item $V(\cH_i) \cap V_1 = T_i$,
    \item $V(\cH_i) \cap A = \emptyset$, and
    \item $\card{V(\cH_i) \cap V_j} \leq |T_i|$ for all $j \in [2,k]$.
  \end{enumerate}
\end{claim}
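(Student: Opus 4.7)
The plan is to deduce \cref{claim:cover-T} by a direct appeal to the recursive definition of $C(d'+\eta)$. Set $W_1 := T_i$, $W_j := V_j \setminus A$ for $j \in [2,i-1]$, $W_i := \tilde{V}_i \setminus A$, and $W_j := U_j \setminus A$ for $j \in [i+1,k]$. Form $G' := K(W_1) \cup H[W_1,\ldots,W_k]$ with the inherited colouring $\chi|_{E(G')}$; since $H$ is monochromatic in colour $1$, every transversal edge of $G'$ also has colour $1$, so the statement $(\star)$ applies verbatim to $(W_1,\ldots,W_k,G',\chi|_{E(G')})$ provided its two numerical hypotheses hold.

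The size hypothesis $|W_j| \geq 2|W_1|$ for $j \in [2,k]$ is routine: $|W_1| = |T_i| \leq |R| \leq \eta^2|V_1|$ by \eqref{eq:R-ub}, while $|V_j|\geq 4|V_1|$, $|\tilde{V}_i|=(1-\gamma)|V_i|\geq 2|V_1|$, and $|U_j|=\gamma|V_j|\geq 4\gamma|V_1|$. Removing at most $|R|$ vertices leaves each $|W_j|$ comfortably above $2|T_i|$ thanks to the hierarchy $\eta \ll \gamma$.

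The crux is verifying the density bound $\dd(v;W_2,\ldots,W_k) \geq d'+\eta$ for each $v \in T_i$. By the definition of $T_i$, the pre-removal density exceeds $d' + 2\eta$. Removing $A$ destroys at most $\sum_{j=2}^k |A\cap W_j|\cdot\prod_{j'\neq j}|W_{j'}|$ transversal $k$-cliques, and dividing by $\prod_j|W_j|$ the density drops by at most a small constant times $\sum_{j=2}^k |A\cap W_j|/|W_j|$. The worst ratio is $|R|/|U_j| \leq \eta^2/(4\gamma)$ (the parts $W_j$ inherited from $U_j$ are the smallest), so the total drop is at most $O(k\eta^2/\gamma)$. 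With $\eta = d\gamma^k/2$ one checks that $O(k\eta^2/\gamma) < \eta$, so the new density stays above $d'+\eta$. This density-loss estimate is the only non-bookkeeping step; everything else follows from the chosen parameter hierarchy.

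With both hypotheses verified, the minimality in the definition of $C(d'+\eta)$ produces a collection $\cH_i$ of at most $C(d'+\eta)$ vertex-disjoint monochromatic copies of graphs from $\cF$, contained in $G' \subseteq G$, that canonically covers $W_1 = T_i$. Because $V(G') \cap V_1 = T_i$ and $V(G') \cap A = \emptyset$, items (i) and (ii) follow immediately, and the canonical-cover property gives $|V(\cH_i) \cap V_j| = |V(\cH_i) \cap W_j| \leq |W_1| = |T_i|$ for each $j \in [2,k]$, establishing (iii).
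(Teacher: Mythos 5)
Your proof takes essentially the same route as the paper's: you form the same restricted parts (the paper calls them $V'_s$, $\tilde V'_i$, $U'_s$), verify the size and density hypotheses of $(\star)$ at level $d'+\eta$, and deduce (i)--(iii) from the canonical-covering property. A few intermediate estimates are slightly off --- you cite the hypothesis $|V_j|\geq 4|V_1|$ from \cref{lem:abs}, whereas \cref{lem:abs-onecolour} only gives $|V_j|\geq 2|V_1|$; the bound $|R|\leq\eta^2|V_1|$ comes from the application of \cref{prop:greedy}, not from \cref{eq:R-ub}; and your ``$|A\cap W_j|$'' should read $|A\cap V_j|$ or $|B_j\setminus W_j|$ (note $A\cap W_j=\emptyset$ by construction) --- but none of these affect the validity of the argument, and the paper itself simply bounds the density loss by $\sum_{s}|B_s\setminus W_s|/|W_s|\leq(k-1)\eta/(2k)<\eta$.
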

\begin{claim}\label{claim:cover-S}
  For every set $A \subseteq V(G) \setminus (S_1 \cup U_1)$ with $|A \cap V_s| \leq |R|$ for all $s
  \in [2,k]$, there is a monochromatic copy $H_1$ of a graph
  from $\cF$ in $G$, such that
  \begin{enumerate}[(i)]
    \item $V(H_1) \cap V_1 = S_1 \cup U_1$,
    \item $V(H_1) \cap A = \emptyset$ and
    \item $\card{V(H_1) \cap V_j} \leq |S_1 \cup U_1|$ for all $j \in [2,k]$.
  \end{enumerate}
\end{claim}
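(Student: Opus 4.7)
My approach is to apply \cref{thm:blowup}(ii) to a small perturbation of the super-regular cylinder $Z$ in the colour-$1$ graph. Define $W_1:=U_1\cup S_1$ and $W_j:=U_j\setminus A$ for $j\in[2,k]$, and set $Z':=(W_1,\ldots,W_k)$. If $Z'$ satisfies the hypotheses of \cref{thm:blowup}(ii) in colour $1$, then the resulting monochromatic copy $H_1$ of some graph from $\cF$ automatically fulfils (i)--(iii): it covers $W_1=U_1\cup S_1$; it lies in $\bigcup_j W_j$, which is disjoint from $A$ (since $A\cap(U_1\cup S_1)=\emptyset$ by hypothesis and $A\cap W_j=\emptyset$ for $j\geq 2$ by construction); and it uses at most $|W_1|=|U_1\cup S_1|$ vertices in each $W_j$.

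The size condition $|W_j|\geq|W_1|$ is routine: by \eqref{eq:R-ub} we have $|R|\leq\e^2|U_1|$, and combined with $|U_j|\geq 2|U_1|$ (which follows from $|V_j|\geq 2|V_1|$ and $|U_i|=\gamma|V_i|$) this gives $|W_j|\geq|U_j|-|R|\geq 2|U_1|-|R|\geq|U_1|+|R|\geq|W_1|$. For the super-regularity of $Z'$ in colour $1$, I would apply \cref{lem:robust} to each of the $\binom{k}{2}$ pairs of $Z$. Pairs $(U_i,U_j)$ with $i,j\geq 2$ are modified only by the removal of $A\cap U_i$ and $A\cap U_j$, each of size at most $|R|\leq\e^2|U_i|$ by \eqref{eq:R-ub} (take empty insertion sets in \cref{lem:robust}). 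Pairs $(U_1,U_j)$ are additionally enlarged by $S_1$, again of size at most $|R|\leq\e^2|U_1|$, so all quantitative hypotheses of \cref{lem:robust} are met.

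The one nontrivial step, and the only place the definition of $S_1$ is actually used, is the degree condition demanded by \cref{lem:robust} on the inserted vertices: for every $v\in S_1$ and every $j\in[2,k]$ one needs $\deg_1(v,U_j)\geq(d/2)|U_j|$. By the definition of $S_1$, each $v\in S_1$ satisfies $\dd_1(v,U_2,\ldots,U_k)\geq d_1\geq (1-\gamma)d'\geq d/2$, and bounding the number of colour-$1$ $k$-cliques through $v$ by $\deg_1(v,U_j)\cdot\prod_{i\geq 2,\,i\neq j}|U_i|$ yields precisely $\deg_1(v,U_j)\geq d_1|U_j|\geq (d/2)|U_j|$. \cref{lem:robust} then gives that $Z'$ is $(8\e,\,d/2-8\e,\,d/4)$-super-regular in colour $1$, and the choice $\e=(d/100)^{2K'\Delta}$ comfortably satisfies the threshold in \cref{thm:blowup}. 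Applying \cref{thm:blowup}(ii) to $Z'$ produces the required $H_1$. The main ``obstacle'' is really just this clique-density-to-pairwise-degree conversion; the remainder is bookkeeping within the hierarchy $d\gg\e\gg\gamma\gg\eta$.
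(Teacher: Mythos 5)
Your proposal is correct and follows essentially the same route as the paper's own proof: both form the perturbed cylinder $Z' = (U_1 \cup S_1, U_2 \setminus A, \ldots, U_k \setminus A)$, use \cref{eq:R-ub} to check that the sets added/removed are small, convert the clique-density bound $\dd_1(v,U_2,\ldots,U_k)\geq d_1$ for $v\in S_1$ into the pairwise degree bound $\deg_1(v,U_j)\geq (d/2)|U_j|$ needed by \cref{lem:robust}, and then apply \cref{thm:blowup}(ii). The only cosmetic difference is that the paper invokes \cref{lem:robust} once for the whole cylinder while you spell it out pair by pair.
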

With these claims at hand, we can now prove \cref{ineq:Cdj}. First, we apply \cref{claim:cover-T}
repeatedly to get collections $\cH_2, \ldots, \cH_k$ of at most $C(d' + \eta)$ disjoint monochromatic copies of graphs from $\cF$ that canonically covers $T_2, \ldots, T_k$, respectively, as follows.
Let $ i \in \{2,\ldots,k\}$ and suppose we have constructed $\cH_2, \ldots, \cH_{i-1}$. Let $A_i :=
V(\cH_2) \cup \ldots \cup V(\cH_{i-1})$ and note that $|A_i \cap V_s| \leq |T_2| + \cdots +
|T_{i-1}| \leq |R|$ for all $s \in [2,k]$. Apply now \cref{claim:cover-T} for $i$ and $A = A_i$ to
get the desired collection $\cH_i$.

Next, we apply \cref{claim:cover-S} with $A = V(\cH_2) \cup \ldots \cup V(\cH_k)$
to get a copy $H_1$ of a graph from $\cF$ with the desired properties. Note that, similarly as above, we have $|A \cap V_s| \leq |R|$ for all $s \in [2,k]$. By construction $V(H_1), V(\cH_2), \ldots, V(\cH_k)$ and
$V(\cH_R)$ are disjoint and cover $V_1$. Moreover, for every $s \in [2,k]$, we have
\begin{align*}
  \card{\left( V(H_1) \cup \ldots \cup V(\cH_k) \cup V(\cH_R) \right) \cap V_s}
  &\leq |S_1 \cup U_1| + |T_1| + |T_2| + \cdots + |T_k|\\
  &\leq |U_1 \cup R| \leq |V_1|.
\end{align*}
Hence, $\{H_1\} \cup \ldots \cup \cH_k \cup \cH_R$ canonically covers $V_1$.
Finally, we have
$\card{\{H_1\} \cup \ldots \cup \cH_k \cup \cH_R} \leq P(d) + k
C\big(d'+\eta\big)$, proving \cref{ineq:Cdj}.
It remains now to prove \cref{claim:split-R,claim:cover-T,claim:cover-S}.
\begin{claimproof}[Proof of \cref{claim:split-R}]
%We will first show that $R = S_1 \cup T_2 \cup \ldots \cup T_k$. For this,
Since  $S_k = R$, it suffices to show $S_i\subseteq S_{i-1} \cup T_i$ for each $i \in [2,k]$.
Let $ i \in [2,k]$ and let $v \in S_i\setminus S_{i-1}$. We have
\begin{align*}
  \deg(v,V_2,\ldots,V_{i-1},\tilde{V}_i,U_{i+1},\ldots,U_k)
  = & \deg(v,V_2,\ldots,V_{i-1},V_i,U_{i+1},\ldots,U_k) \\
    & - \deg(v,V_2,\ldots,V_{i-1},U_i,U_{i+1},\ldots,U_k).
\end{align*}
Therefore,
\begin{align*}
  \dd(v,V_2,\ldots,V_{i-1},\tilde{V}_i,U_{i+1},\ldots,U_k)
  = {} & \dd(v,V_2,\ldots,V_{i-1},V_i,U_{i+1},\ldots,U_k) \frac{|V_i|}{|\tilde{V}_i|}\\
       & - \dd(v,V_2,\ldots,V_{i-1},U_i,U_{i+1},\ldots,U_k) \frac{|U_i|}{|\tilde{V}_i|} \\
  > {} & d_{i}  \frac{|V_i|}{|\tilde{V}_i|} - d_{i-1} \frac{|U_i|}{|\tilde{V}_i|}  \\
  = {} & \frac{d_i - \gamma d_{i-1}}{1-\gamma} \\
  = {} & \frac{(1-\gamma^i)d' - \gamma(1-\gamma^{i-1})d'}{(1-\gamma)(1-\gamma^k)}\\
  = {} & \frac{d'}{1-\gamma^k} \geq d'+2\eta,
\end{align*}
where we use \cref{eq:di} and the definition of $\eta$ to obtain the last identities. Thus $v \in T_i$ and hence $S_i\subseteq S_{i-1} \cup T_i$.
\end{claimproof}
\begin{claimproof}[Proof of \cref{claim:cover-T}]
Let $V'_s := V_s \setminus A$ for all $s \in [2,i-1]$, $\tilde{V}'_i := \tilde{V}_i \setminus A$ and $U'_{s} := U_s \setminus A$ for all $s \in [i+1,k]$. Then, by \cref{eq:R-ub}, we have
\begin{align*}
  |V'_s|
  \geq |V_s| - |R|
  \geq \left( 1 - \tfrac{\eta}{4k} \right) |V_s|
  \geq \frac{|V_s|}{2},
  & \text{ for } s=2,\ldots,i-1,\\
  |\tilde{V}'_i|
  \geq |\tilde{V}_i| - |R|
  \geq \left( 1 - \tfrac{\eta}{4k} \right) |\tilde{V}_i|
  \geq \frac{|\tilde{V}_i|}{2},
  & \text{ and } \\
  |U'_s|
  \geq |U_s| - |R|
  \geq \left( 1 - \tfrac{\eta}{4k} \right) |U_s|
  \geq \frac{|U_j|}{2},
  & \text{ for } s=i+1,\ldots,k.
\end{align*}
In particular, it follows that
\begin{align*}
  |V_s \setminus V'_s| \leq |R| \leq \frac{\eta}{4k} |V_s|
  \leq \frac{\eta}{2k} |V'_s|, &\text{ for } s=2,\ldots,i-1,\\
  |V_i \setminus V'_i| \leq |R| \leq \frac{\eta}{4k} |V_i|
  \leq \frac{\eta}{2k} |V'_i|, &\text{ and } \\
  |U_s \setminus U'_s| \leq |R| \leq \frac{\eta}{4k} |U_s|
  \leq \frac{\eta}{2k} |U'_s|, &\text{ for } s=i+1,\ldots,k.
\end{align*}
Therefore, for every $v \in T_i$, we have
\begin{align*}
  &\dd(v,V'_2,\ldots,V'_{i-1},\tilde{V}'_i,U'_{i+1},\ldots,U'_k)\\
  &\geq d'+2\eta
      - \sum_{s=2}^{i-1} \frac{|V_s\setminus V'_s|}{|V'_s|}
      -\frac{|\tilde{V}_i\setminus \tilde{V}'_i|}{|\tilde{V}'_i|}
      - \sum_{s=i+1}^k \frac{|U_{s}\setminus U'_{s}|}{|U'_{s}|}\\
  &\geq d'+2\eta - (k-1) \frac{\eta}{2k} \geq d' + \eta.
\end{align*}
Hence, by definition of $C(d' + \eta)$ (see $(\star)$), there exists a collection $\mathcal{H}_{i}$ of at most $C(d' + \eta)$ monochromatic copies of graphs from $\cF$ that canonically covers $T_{i}$ in the graph
\begin{align*}
  K(T_i) \cup K(T_i,V'_2,\ldots,V'_{i-1},\tilde{V}'_i,U'_{i+1},\ldots,U'_k).
\end{align*}
By construction, $\cH_i$ satisfies the requirements of the claim (note that $(iii)$ holds since $\cH_i$ is a canonical covering).
\end{claimproof}
\begin{claimproof}[Proof of \cref{claim:cover-S}]
  Let $Y_1 = S_1$ and, for each $i \in [2,k]$, let $X_i = U_i \cap A$. Observe that $|Y_1| \leq |R| \leq \e^2 |U_1|$ and $|X_i| \leq |R| \leq \e^2 |U_i|$ for all $i \in [2,k]$.
  Let $U'_1 = U_1 \cup Y_1$ and, for each $i \in [2,k]$, let $U'_i := U_i \setminus X_i$.
  We now consider the cylinder $Z' := (U'_1,\ldots,U'_k)$. By definition of $S_1$, we have
  $\dd(v,U_2,\ldots,U_k) \geq d_1 \geq d/2$ and in particular $\deg(v,U_i) \geq d/2 \cdot
  |U_i|$ for all $ v \in Y_1$ and $i \in [2,k]$.

  Hence, by \cref{lem:robust}, $Z'$ is $(8\eps,d/4)$-super-regular. Furthermore, we have $|U_1'| \leq |U_i'|$ for all $i \in [k]$. Thus, by \cref{thm:blowup}, there is a monochromatic copy $H_1$ of a graph from $\cF$ in $Z$ that covers $U_1' = U_1 \cup S_1$ and at most $|U_1'|$ vertices from each of $U_2', \ldots, U_k'$. By construction, this copy satisfies the requirements of the claim.
\end{claimproof}
This finishes the proof of \cref{lem:abs-onecolour}.
\end{proof}

\subsection{Proof of \texorpdfstring{\cref{thm:main}}{the main theorem}}\label{subsec:main}
In this section, we will finish the proof of \cref{thm:main}. We will make use of the following
lemma from~\cite{Bustamante2019} and follow the same proof technique. Since our proof of this lemma
is short, we include it here for completeness. Given a $k$-uniform hypergraph $\mathcal{H}$, a
vertex $v \in V(\mathcal{H})$ and sets $B_2, \ldots, B_k \subseteq V(\mathcal{H})$, we define
\[
  \deg_{\mathcal{H}}(v,B_2,\ldots,B_k) := \left|\left\{ (v_2, \ldots, v_k) \in B_2 \times \ldots
    \times B_k: \{v,v_2,\ldots,v_k\} \in E(\mathcal{H}) \right\}\right|.
\]
\begin{lemma}\label{lem:indtr}
  Let $k$ and $N$ be positive integers and let $\mathcal{H}$ be a $k$-uniform
  hypergraph. Suppose that $B_1, \ldots,B_N \subseteq V(\mathcal{H})$ are
  non-empty disjoint sets such that for every $1\leq i_1 < \cdots < i_k
  \leq N$ we have
  \[
    \deg_{\mathcal{H}}(v,B_{i_2}, \ldots, B_{i_k}) < {\binom{N}{k}}^{-1}
    |B_{i_2}| \cdots |B_{i_k}|
  \]
  for all $ v \in B_{i_1}$. Then, there exists an independent set $\left\{ v_1,
    \ldots, v_N \right\}$ with $v_i \in B_i$, for each $i \in [N]$.
\end{lemma}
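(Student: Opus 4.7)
The plan is to prove this via a straightforward first-moment (probabilistic) argument. The hypothesis is calibrated so that, if we pick one vertex from each $B_i$ uniformly at random, the expected number of hyperedges spanned by the choices is strictly less than one; hence some realisation must give zero edges, i.e.\ an independent transversal.

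More concretely, I would independently choose a vertex $v_i \in B_i$ uniformly at random for each $i \in [N]$, and let $X$ denote the number of hyperedges of $\mathcal{H}$ contained in $\{v_1, \ldots, v_N\}$. For each $k$-tuple $1 \leq i_1 < \cdots < i_k \leq N$, let $A_{i_1,\ldots,i_k}$ be the event that $\{v_{i_1}, \ldots, v_{i_k}\} \in E(\mathcal{H})$. Since the sets $B_1, \ldots, B_N$ are pairwise disjoint, every such hyperedge restricted to these sets corresponds to a unique tuple $(w_1, \ldots, w_k) \in B_{i_1} \times \cdots \times B_{i_k}$, and thus the number of such hyperedges equals $\sum_{v \in B_{i_1}} \deg_{\mathcal{H}}(v, B_{i_2}, \ldots, B_{i_k})$. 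By the hypothesis,
\[
\Pr[A_{i_1,\ldots,i_k}] = \frac{\sum_{v \in B_{i_1}} \deg_{\mathcal{H}}(v, B_{i_2}, \ldots, B_{i_k})}{|B_{i_1}| |B_{i_2}| \cdots |B_{i_k}|} < \binom{N}{k}^{-1}.
\]

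Now, by linearity of expectation,
\[
\mathbb{E}[X] = \sum_{1 \leq i_1 < \cdots < i_k \leq N} \Pr[A_{i_1,\ldots,i_k}] < \binom{N}{k} \cdot \binom{N}{k}^{-1} = 1.
\]
Since $X$ is a non-negative integer, there exists a choice of $(v_1, \ldots, v_N)$ with $X = 0$, i.e.\ no hyperedge of $\mathcal{H}$ is contained in $\{v_1, \ldots, v_N\}$, giving the required independent set.

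I do not anticipate any real obstacle here: the only points that require care are (i) verifying that the disjointness of the $B_i$'s lets us identify unordered hyperedges with ordered tuples so that the degree sum counts each relevant edge exactly once, and (ii) noting that we need the \emph{strict} inequality in the hypothesis so that $\mathbb{E}[X] < 1$ rather than $\leq 1$ — otherwise one could not immediately conclude that some realisation attains $X = 0$.
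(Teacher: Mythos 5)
Your proof is correct and takes essentially the same route as the paper: both pick one uniformly random vertex from each $B_i$ and use a first-moment/union-bound computation to show that the expected number of (equivalently, the probability of any) hyperedges among the chosen vertices is strictly less than $1$. The only cosmetic difference is that the paper phrases the bound directly as $\Pro{\text{$I$ not independent}} < 1$ rather than via $\Exp{X} < 1$, which is the same calculation.
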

\begin{proof}
  For each $i \in [N]$, let $v_i$ be chosen uniformly at random from $B_i$. Let
  $I = \{v_1,\ldots,v_N\}$. Then
  we have
  \begin{align*}
    \Pro{\text{$I$ is not an independent set}}
    & \leq \sum_{1\leq i_1 < \cdots < i_k \leq N} \Pro{\{v_{i_1},\ldots,v_{i_k}\} \in E(\mathcal{H})} \\
    & = \sum_{1\leq i_1 < \cdots < i_k \leq N} \frac{1}{|B_{i_1}|} \sum_{v \in B_1} \Pro{\{v_{i_1},\ldots,v_{i_k}\} \in E(\mathcal{H}) \;|\; v_{i_1} = v} \\
    & = \sum_{1\leq i_1 < \cdots < i_k \leq N} \frac{1}{|B_{i_1}|} \sum_{v \in B_1} \frac{\deg_{\mathcal{H}}(v,B_{i_2}, \ldots, B_{i_k})}{|B_{i_2}| \cdots |B_{i_k}|} \\
    & < \sum_{1\leq i_1 < \cdots < i_k \leq N} \binom{N}{k}^{-1} = 1.
  \end{align*}
  Therefore, there exists an independent set $\{v_1,\ldots,v_N\}$ with $v_i\in B_i$, for each
  $i\in[N]$.
\end{proof}
We are now able to prove \cref{thm:main}. The main idea is to find reasonably large cylinders that are super-regular for one of the colours, greedily cover most of the remaining vertices using \cref{prop:greedy} and then apply the Absorption Lemma (\cref{lem:abs}) to the set of remaining vertices that share many monochromatic cliques with the cylinders.
We then iterate this process until no vertices remain. Using \cref{lem:indtr}, we will show that a bounded number of iterations suffices.
\begin{proof}[Proof of \cref{thm:main}]
Fix $r, \Delta \geq 2$, $\cF \in {\cF}_{\Delta}$. Let $G$ be an $r$-edge-coloured complete graph on $n$
vertices. Let
\[
  k= \Delta +2,
  \quad N = r^{rk},
  \quad \delta = \binom{N}{k}^{-1}
  \quad \text{ and }
  \quad d = \frac{1}{2r}.
\]
In order to use \cref{thm:blowup}
and \cref{lemma:regcyl}, respectively, consider the constants
\[
  \eps = {(\delta d^{\Delta})}^{2K'}
  \quad \text{ and }
  \quad \gamma = \e^{r^{8rk}\e^{-5}},
\]
where $K'$ is the universal constant given by \cref{thm:blowup}. Consider also the constants
\[
  \alpha = \eps^2
  \quad \text{ and }
  \quad C_1 = 4r^{2rk\Delta}\log\left(\frac{4}{\alpha\gamma}\right)
\]
in order to use \cref{prop:greedy}. Finally, let
\[
C_2 = \exp^{2}({(2r/\delta)}^{\tilde{K}\Delta}) \leq \exp^{2}\left( r^{16\tilde{K} r \Delta^3} \right),
\]
where $\tilde{K}$ is the universal constant from \cref{lem:abs}, and
let $K = 20\tilde{K}$.

We will build a framework consisting of many $k$-cylinders working as absorbers and small sets that
can be absorbed by them. More precisely, our goal is to define sets with the following properties
(\cref{fig:proof} should help the reader to understand the structure of those sets as we define
them):

\begin{quotation}
  \noindent
  \textbf{Framework.} There are sets $Z_1, \ldots, Z_N$, $S_{k-1}, \ldots, S_N$,
  $R_k, \ldots, R_{N+1}$, $R'_k, \ldots, R'_{N+1}$ with the following properties.
  \begin{enumerate}[(F.1)]
  \item\label{item:F-partition} $V(G) = \bigcup_{i=1}^{N} Z_i \cup \bigcup_{i=k-1}^N S_i \cup
    \bigcup_{i=k}^{N+1} R'_i$ is a partition.
  \item $Z_1, \ldots, Z_N$\footnote{We shall identify the cylinders with their vertex-set.} are
    $k$-cylinders which are $(\e,d)$-super-regular in one of the colours (or empty).
  \item $S_{k-1}, \ldots, S_N$ are sets of vertices which we will cover greedily by monochromatic
    copies of graphs from $\cF$.
  \item For each $i \in [k,N+1]$, $R'_i$ can be partitioned into sets $R_{i,I}'$ for all $I \in
    \binom{[i-1]}{k-1}$, such that, for each $I = \{ i_1, \ldots, i_{k-1}\} \subseteq [i]$, we have
    $\dd_{[r]}(u,Z_{i_1}, \ldots, Z_{i_{k-1}}) \geq \delta$ for all $u \in R'_{i+1,I}$.
  \item For each $k \leq i < j \leq N+1$, we have $S_j \cup Z_j \cup R'_j \subseteq R_i$ and $|R_i| \leq
    \alpha |Z_{i-1}|$.
  \end{enumerate}
\end{quotation}

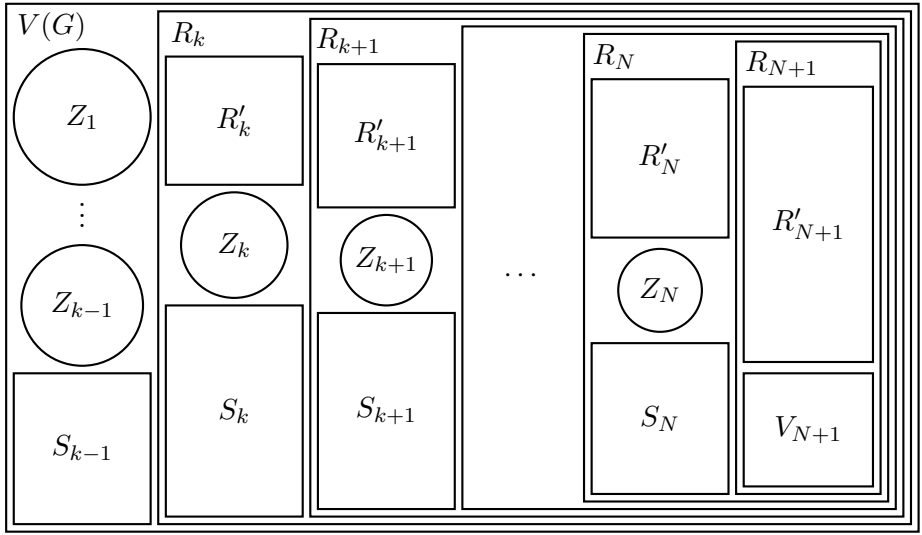
\begin{figure}
  \centering
  \begin{tikzpicture} [scale=1, thick, auto,
    cylinder/.style={circle, draw, inner sep=0pt, outer sep=0pt},
    leftover/.style={rectangle, draw, anchor=center, inner sep=0pt, outer sep=0pt},
    centeredlabel/.style={midway, anchor=center},
    ]

    \coordinate (a) at (0,0);
    \coordinate (b) at (12,7);

    \draw (a) rectangle (b);
    \node at ($(.6,6.7)$) {$V(G)$};

    \node [cylinder, minimum width=1.8cm](u1) at ($(a)+(1,5.5)$) {$Z_1$};

    \node at ($(a)+(1.0,4.3)$) {$\vdots$};

    \node [cylinder, minimum width=1.6cm](u2) at ($(a)+(1,3)$) {$Z_{k-1}$};

    \node [leftover,minimum width=1.8cm, minimum height=2cm](s3) at ($(a)+(1,1.1)$) {$S_{k-1}$};

    \draw ($(a)+(2,.1)$) rectangle ($(b)+(-.1,-.1)$);
    \node at ($(2.4,6.6)$) {$R_k$};

    \draw ($(a)+(2.1,4.6)$) rectangle ($(b)+(-8.1,-.7)$) node [centeredlabel]{$R'_k$};

    \node [cylinder, minimum size=1.4cm](u3) at ($(a)+(3,3.8)$) {$Z_k$};

    \node [leftover,minimum width=1.8cm, minimum height=2.8cm](s3) at ($(a)+(3,1.6)$) {$S_k$};

    \draw ($(a)+(4,.2)$) rectangle ($(b)+(-.2,-.2)$);
    \node at ($(4.5,6.5)$) {$R_{k+1}$};

    \draw ($(a)+(4.1,4.3)$) rectangle ($(b)+(-6.1,-.8)$) node [centeredlabel]{$R'_{k+1}$};

    \node [cylinder, minimum width=1.2cm](u4) at ($(a)+(5,3.6)$) {$Z_{k+1}$};

    \node [leftover,minimum width=1.8cm, minimum height=2.6cm](s3) at ($(a)+(5,1.6)$) {$S_{k+1}$};

    \draw ($(a)+(6,.3)$) rectangle ($(b)+(-.3,-.3)$);

    \node (u5) at ($(a)+(6.8,3.4)$) {$\cdots$};

    \draw ($(a)+(7.6,.4)$) rectangle ($(b)+(-.4,-.4)$);
    \node at ($(8.0,6.3)$) {$R_N$};

    \draw ($(a)+(7.7,3.9)$) rectangle ($(b)+(-2.5,-1.0)$) node [centeredlabel]{$R'_N$};

    \node [cylinder, minimum width=1.1cm](u4) at ($(a)+(8.6,3.2)$) {$Z_N$};

    \node [leftover,minimum width=1.8cm, minimum height=2.0cm](s3) at ($(a)+(8.6,1.5)$) {$S_N$};

    \draw ($(a)+(9.6,.5)$) rectangle ($(b)+(-.5,-.5)$);
    \node at ($(10.2,6.2)$) {$R_{N+1}$};

    \draw ($(a)+(9.7,2.25)$) rectangle ($(b)+(-0.6,-1.1)$) node [centeredlabel]{$R'_{N+1}$};

    \node [leftover,minimum width=1.7cm, minimum height=1.5cm](s3) at ($(a)+(10.55,1.35)$) {$V_{N+1}$};

  \end{tikzpicture}
  \caption{A partition of $V(G)$. Each set in the picture is much smaller than the closest cylinder $Z_i$ to the left.}\label{fig:proof}
\end{figure}

So let us construct those sets from the framework. First, if $n < 1/4\gamma$, then \cref{cor:greedy}
gives a covering with at most $C_2$ monochromatic vertex-disjoint copies of graphs from $\cF$.
Therefore we may assume that $n \geq 1/4\gamma$. Hence, by applying \cref{lemma:regcyl} multiple
times, we find $k-1$ vertex-disjoint $k$-cylinders $Z_1, \ldots, Z_{k-1}$ such that each of them is
$(\e,d)$-super-regular in some colour (not necessarily the same) and $ \card{Z_1} \geq \cdots \geq
\card{Z_{k-1}} \geq \gamma n/2$. Let $V_{k-1} = V(G) \setminus (Z_1 \cup \cdots \cup Z_{k-1})$. By
\cref{prop:greedy}, there is a collection of at most $C_1$ monochromatic vertex-disjoint copies from
$\cF$ in $V_{k-1}$ covering a set $S_{k-1}$ such that the leftover vertices $R_k = V_{k-1} \setminus
S_{k-1}$ satisfies $|R_k| \leq \alpha \gamma n/2 \leq \alpha |Z_{k-1}|$. Let $R_k' \subseteq
R_k$ be the set of vertices $ u \in R_k$ with $ \dd_{[r]}(u,Z_1, \ldots, Z_{k-1}) \geq \delta$. Let
$R'_{k,[k-1]} = R'_k$ and $V_k = R_k \setminus R_k'$.

Inductively, for each $ i = k, \ldots, N $, we do the following.
If $|V_i| < 1/4\gamma$, we use \cref{cor:greedy} to cover $V_i$ using at most $C_2$ monochromatic vertex-disjoint copies from $\cF$ and let $Z_i=S_{i}=R_{i+1}=R'_{i+1}=V_{i+1}=\emptyset$.
Otherwise, we apply \cref{lemma:regcyl} to find a monochromatic $(\e,d)$-super-regular $k$-cylinder $Z_i$ contained in $V_i$ with $\card{Z_i} \geq \gamma \card{V_i}$.
By \cref{prop:greedy}, there is a collection of at most $C_1$ monochromatic, vertex-disjoint copies from $\cF$ in $ V_i \setminus Z_i $ covering a set $S_i \subseteq V_i$, so that the set of leftover vertices $R_{i+1} = V_i \setminus S_i$ has size at most $\alpha \gamma|V_i| \leq \alpha \card{Z_i}$.

Let $R'_{i+1}$ be the set of vertices $u$ in $R_{i+1}$ for which there is a set $I = \{i_1,\ldots,i_{k-1}\} \subseteq [i]$ such that $\dd_{[r]}(u,Z_{i_1},\ldots,Z_{i_{k-1}}) \geq \delta$. Let
\[
R_{i+1}' = \bigcup_{I \in \binom{[i]}{k-1}} R'_{i+1,I}
\]
be a partition of $R'_{i+1}$ so that, for each $I = \{ i_1, \ldots, i_{k-1}\} \subseteq [i]$, we have $\dd_{[r]}(u,Z_{i_1}, \ldots, Z_{i_{k-1}}) \geq \delta$ for all $u \in R'_{i+1,I}$.
Finally, let $V_{i+1} = R_{i+1} \setminus R_{i+1}'$.

The following claim implies that these sets partition $V(G)$ as in \cref{item:F-partition}.
\begin{claim}\label{claim:2}
  The set $V_{N+1}$ is empty.
\end{claim}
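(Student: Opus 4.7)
Suppose for contradiction that $V_{N+1}$ is nonempty and pick $v \in V_{N+1}$; by the construction of the framework, for every $(k-1)$-subset $\{i_1, \ldots, i_{k-1}\} \subseteq [N]$ we have $\dd_{[r]}(v, Z_{i_1}, \ldots, Z_{i_{k-1}}) < \delta = \binom{N}{k}^{-1}$, i.e., $v$ lies in strictly fewer than $\delta\,|Z_{i_1}|\cdots|Z_{i_{k-1}}|$ monochromatic $k$-cliques transversal in any $k-1$ cylinders. The plan is to combine \cref{lem:indtr} with Ramsey's theorem to reach a contradiction from this restrictive property.

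Consider the $k$-uniform hypergraph $\mathcal{H}$ on $V(G)$ whose edges are the monochromatic $k$-cliques of $G$ that contain $v$, and apply \cref{lem:indtr} with the $N$ disjoint nonempty sets $B_1 = \{v\}$ and $B_i = Z_{i-1}$ for $i \in [2, N]$. Since every edge of $\mathcal{H}$ contains $v$ and $v \notin B_i$ for $i \geq 2$, the degree $\deg_\mathcal{H}(u, B_{j_2}, \ldots, B_{j_k})$ vanishes whenever $u \in B_{j_1}$ with $j_1 \geq 2$; and for $u = v$ (the case $j_1 = 1$) this degree counts monochromatic $k$-cliques containing $v$ and transversal in the selected $k-1$ cylinders, which is strictly less than $\delta\prod_{t\geq 2}|B_{j_t}|$ by the defining property of $v$. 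The threshold $\binom{N}{k}^{-1}$ required by \cref{lem:indtr} equals $\delta$, so the degree hypothesis is satisfied, and the lemma yields an independent transversal $\{v, u_1, \ldots, u_{N-1}\}$ with $u_i \in Z_i$, such that no $(k-1)$-subset $\{u_{i_1}, \ldots, u_{i_{k-1}}\}$ together with $v$ forms a monochromatic $k$-clique.

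To finish, pigeonhole the $v$-edge colours $\chi(vu_i) \in [r]$ to extract a subset $I \subseteq [N-1]$ of size $|I| \geq (N-1)/r$ on which $\chi(vu_i) = c^*$ is constant for some $c^* \in [r]$. Since $N = r^{rk}$, the quantity $(N-1)/r$ comfortably exceeds the relevant multi-colour Ramsey number, and applying a Ramsey-type argument to $\{u_i : i \in I\}$ produces a monochromatic $K_{k-1}$ in colour $c^*$ on some $\{u_{i_1}, \ldots, u_{i_{k-1}}\}$; combined with the colour-$c^*$ edges from $v$ to each $u_{i_t}$, this yields a monochromatic $k$-clique $\{v, u_{i_1}, \ldots, u_{i_{k-1}}\}$ in colour $c^*$, contradicting the independent transversal property obtained above. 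The main obstacle is forcing the Ramsey-produced $K_{k-1}$ to lie in the specific colour $c^*$ rather than some other colour; this is handled via an auxiliary colouring refinement on $\{u_i : i \in I\}$ that retains each edge's original colour only when it equals $c^*$ and marks all other edges with a junk symbol, so that any non-junk monochromatic clique automatically has colour $c^*$. The corresponding enlarged Ramsey parameter is still comfortably dominated by $N$ up to absorbing a multiplicative constant, and the remaining bookkeeping is routine.
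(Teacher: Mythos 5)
Your approach differs from the paper's in a way that turns out to be fatal. You restrict the hypergraph $\mathcal{H}$ to the monochromatic $k$-cliques \emph{containing} a fixed $v \in V_{N+1}$, apply \cref{lem:indtr} with the singleton $B_1 = \{v\}$ and $B_i = Z_{i-1}$, and obtain an independent transversal $\{v, u_1, \ldots, u_{N-1}\}$. This part is fine (and the degree condition does hold, since for vertices $u \neq v$ every degree in $\mathcal{H}$ vanishes, while for $v$ itself the defining property of $R_{N+1} \setminus R'_{N+1}$ gives density below $\delta = \binom{N}{k}^{-1}$). The problem is the contradiction step: you then need a set $\{u_{i_1}, \ldots, u_{i_{k-1}}\}$ whose pairwise edges \emph{and} whose edges to $v$ all share one colour. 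After pigeonholing the $v$-edges onto a majority colour $c^*$, Ramsey's theorem on $\{u_i : i \in I\}$ only produces a monochromatic $K_{k-1}$ in \emph{some} colour, with no control over whether that colour is $c^*$. The ``junk symbol'' refinement does not rescue this: two-colour Ramsey on ``$c^*$ versus junk'' may return a clique that is monochromatically junk, i.e.\ a $K_{k-1}$ all of whose edges avoid $c^*$, which is no contradiction at all. There is no colouring trick that couples the fixed $v$-edge colour to the Ramsey output here, so the argument has a genuine gap, not merely a bookkeeping issue.

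The paper sidesteps this entirely by never singling out $v$. It takes $\mathcal{H}$ to be the hypergraph of \emph{all} monochromatic $k$-cliques inside $Z_1 \cup \cdots \cup Z_N \cup V_{N+1}$, applies \cref{lem:indtr} to the $N+1$ sets $Z_1, \ldots, Z_N, V_{N+1}$ (ordered so that each set's degree is measured into earlier cylinders, which is exactly what $Z_i \subseteq R_i \setminus R'_i$ for $i \geq k$ and $V_{N+1} = R_{N+1}\setminus R'_{N+1}$ control), and gets an independent transversal of size $N+1$. Since $N \geq R_r(K_k)$, \emph{some} $k$ of these $N+1$ vertices span a monochromatic $K_k$; it need not contain the vertex from $V_{N+1}$, and that is precisely the point. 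This $K_k$ is a hyperedge of $\mathcal{H}$, contradicting independence. The freedom to locate the monochromatic clique anywhere among the $N+1$ vertices is what your singleton $\{v\}$ construction forfeits, and it is exactly the freedom that makes the Ramsey step go through.
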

\begin{claimproof}%[Proof of \cref{claim:2}]
  Define a $k$-uniform hypergraph $\mathcal{H}$ with vertex set $U = Z_1 \cup \ldots \cup Z_{N} \cup
  V_{N+1}$ and hyperedges corresponding to monochromatic $k$-cliques in $G[U]$. If $V_{N+1}$ is
  non-empty, then so are $Z_1, \ldots, Z_N$. Since for each $i=k,\ldots,N$ we have $Z_i \subseteq
  R_i\setminus R'_i$ and $V_{N+1} = R_{N+1}\setminus R'_{N+1}$, it follows that $\mathcal{H}$ satisfies the hypothesis of \cref{lem:indtr}.
  Therefore, there is an independent set $\{v_1,\ldots,v_{N+1}\}$ in $\mathcal{H}$ of size $N+1$. On the other hand, since $N \geq R_r(K_k)$, it follows that the set $\{v_1,\ldots,v_{N+1}\}$ has a
  monochromatic $k$-clique in $G[U]$, which is a contradiction.
\end{claimproof}
The vertices in $S_{k-1} \cup \cdots \cup S_N$ are already covered by monochromatic copies of graphs
from $\mathcal{F}$. Our goal now is to cover the sets $R'_k,\ldots,R'_{N+1}$ using \cref{lem:abs} without using too many vertices from the cylinders $Z_1,\ldots,Z_N$.
This way, we can cover the remaining vertices in $Z_1\cup\cdots\cup Z_N$ using \cref{thm:blowup}.
\begin{claim}\label{claim:1}
  Let $i \in \{k, \ldots, N+1\}$ and $I = \{i_2,\ldots,i_k\} \subseteq [i-1]$. Let
  $A \subseteq V(G) \setminus R_{i,I}$
  be a set with $\card{A \cap Z_{j}} \leq \alpha \card{Z_{j}}$ for each $j \in I$.
  Then there is a collection of at most $C_2$ monochromatic vertex-disjoint copies of
  graphs from $\cF$ in
  \[
    G' = K(R'_{i,I}) \cup K(R'_{i,I},Z_{i_2},\ldots,Z_{i_k})
  \]
  which are disjoint from $A$ and canonically cover $R'_{i,I}$.
\end{claim}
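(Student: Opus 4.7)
The strategy is to apply the Absorption Lemma (\cref{lem:abs}) to the cylinder-like structure
$(V_1, V_2, \ldots, V_k) := (R'_{i,I}, Z_{i_2} \setminus A, \ldots, Z_{i_k} \setminus A)$,
with density parameter $d = \delta/2$. The graph
$K(V_1) \cup K(V_1, V_2, \ldots, V_k)$ is a subgraph of $G'$, so any monochromatic family of
$\mathcal{F}$-copies produced by the Absorption Lemma will automatically lie in $G'$ and be disjoint
from $A$, giving exactly the canonical covering required. Since the Absorption Lemma outputs at most
$\exp^2((r/d)^{\tilde K\Delta}) = \exp^2((2r/\delta)^{\tilde K\Delta}) \leq C_2$ copies, this will finish the proof.
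The two things I need to check are the size condition $|V_j| \geq 4|V_1|$ and the density
condition $\dd_{[r]}(u,V_2,\ldots,V_k) \geq d$ for every $u \in V_1$.

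For the size condition, recall from the framework construction that $|R_{j+1}| \leq \alpha |Z_j|$
and $|V_{j+1}| \leq |R_{j+1}|$, so the cylinder sizes are nonincreasing along the chain:
$|Z_{j+1}| \leq |V_{j+1}| \leq \alpha |Z_j|$. Iterating this gives $|Z_{i_j}| \geq |Z_{i-1}|$ for
every $i_j \leq i-1$. Since $R'_{i,I} \subseteq R_i$, we have $|R'_{i,I}| \leq |R_i| \leq \alpha |Z_{i-1}| \leq
\alpha |Z_{i_j}|$. Combined with the assumption $|A \cap Z_{i_j}| \leq \alpha |Z_{i_j}|$, this yields
\[
|Z_{i_j} \setminus A| \;\geq\; (1-\alpha)|Z_{i_j}| \;\geq\; \frac{1-\alpha}{\alpha}\,|R'_{i,I}| \;\geq\; 4|R'_{i,I}|,
\]
since $\alpha = \e^2$ is tiny.

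For the density condition, the framework property gives $\dd_{[r]}(u,Z_{i_2},\ldots,Z_{i_k}) \geq
\delta$ for every $u \in R'_{i,I}$. Removing $A$ from each $Z_{i_j}$ discards at most an
$\alpha$-fraction of each part, so
\[
\deg_{[r]}(u, Z_{i_2} \setminus A, \ldots, Z_{i_k} \setminus A)
\;\geq\; \deg_{[r]}(u, Z_{i_2}, \ldots, Z_{i_k}) \;-\; (k-1)\alpha \prod_{j=2}^{k} |Z_{i_j}|.
\]
Dividing by $\prod_{j=2}^{k} |Z_{i_j} \setminus A| \geq (1-\alpha)^{k-1} \prod_j |Z_{i_j}|$ gives
\[
\dd_{[r]}(u, Z_{i_2} \setminus A, \ldots, Z_{i_k} \setminus A) \;\geq\; \frac{\delta - (k-1)\alpha}{(1-\alpha)^{k-1}} \;\geq\; \frac{\delta}{2},
\]
again because $\alpha \ll \delta$. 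Hence the hypotheses of \cref{lem:abs} are satisfied with $d = \delta/2$, and applying it yields the desired canonical cover of $R'_{i,I}$ by at most $C_2$ monochromatic copies of graphs from $\cF$, all disjoint from $A$.

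The only potential obstacle is the size bookkeeping: one has to make sure the chain of cylinders is
indeed nonincreasing so that $|Z_{i_j}|$ can be compared to $|R'_{i,I}|$. This follows cleanly from
the inequality $|R_{j+1}| \leq \alpha |Z_j|$ that is built into the framework, and the rest is just
the choice of constants ($\alpha = \e^2$ and $\delta = \binom{N}{k}^{-1}$) ensuring that the
resulting parameter $\delta/2$ feeds correctly into the Absorption Lemma to produce the bound $C_2$.
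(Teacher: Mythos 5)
Your proof is correct and takes essentially the same approach as the paper: set $\tilde V_1 = R'_{i,I}$ and $\tilde V_j = Z_{i_j}\setminus A$, check the size and density hypotheses, and apply \cref{lem:abs} with parameter $\delta/2$. One small slip in the density step: after lower-bounding $\deg_{[r]}(u,\tilde V_2,\ldots,\tilde V_k) \geq (\delta-(k-1)\alpha)\prod_j |Z_{i_j}|$, you divide by $\prod_j|Z_{i_j}\setminus A|\geq (1-\alpha)^{k-1}\prod_j|Z_{i_j}|$ and conclude $\dd\geq\frac{\delta-(k-1)\alpha}{(1-\alpha)^{k-1}}$; this is the wrong direction (a lower bound on the denominator gives an upper bound on the quotient). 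The correct and sufficient step is to use $\prod_j|Z_{i_j}\setminus A|\leq\prod_j|Z_{i_j}|$, yielding $\dd\geq\delta-(k-1)\alpha\geq\delta/2$, which is exactly how the paper normalises.
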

\begin{claimproof}
Let $\tilde V_1 = R_{i,I}'$ and for $j \in [k]\setminus\{1\}$, let $\tilde V_j = Z_{i_j} \setminus A$. Note that $|\tilde V_j| \geq 4|\tilde V_1|$ for every $j \in [k]\setminus\{1\}$ and
\begin{align*}
  \deg_{[r]}(v, \tilde V_2, \ldots, \tilde V_k)
  \geq & \deg_{[r]}(v,Z_{i_2}, \ldots, Z_{i_k}) - k\alpha |Z_{i_2}| \cdots |Z_{i_k}| \\
  \geq & (\delta - k \alpha) |Z_{i_2}| \cdots |Z_{i_k}| \\
  \geq & \delta/2 \cdot |Z_{i_2}| \cdots |Z_{i_k}|
\end{align*}
for every $v \in \tilde V_1$. Hence, by \cref{lem:abs}, there is a collection of at most $C_2$
vertex-disjoint copies from $\cF$ in $\tilde V_1 \cup \ldots \cup \tilde V_k$ that canonically
covers $ \tilde V_1$, finishing the proof.
\end{claimproof}
We will use \cref{claim:1} now to cover $\bigcup_{i=k}^{N+1} R'_{i}$.
Let $\prec$ be a linear order on $\cI := \left\{(i,I): i \in [k,N+1], I \in \binom{[i-1]}{k-1}\right\}$.
Let $(i,I) \in \cI$ and suppose that, for all $(i',I') \in \cI$ with $(i',I') \prec (i,I)$, we have already constructed a family $\cH_{i'.I'}$ of monochromatic copies of graphs from $\cF$ which canonically covers $R'_{i',I'}$ in $K(R'_{i',I'}) \cup K(R'_{i',I'},Z_{i_2'},\ldots,Z_{i_k'})$, where $I' = \{i'_2, \ldots, i'_k\}$, and such that the sets $V(\cH_{i',I'})$, for $(i',I')\prec (i,I)$, are disjoint.

Let $A = \bigcup_{(i',I') \prec (i,I)} V(\cH_{i',I'})$ be the set of already covered vertices.
We claim that
\begin{equation}\label{eq:AcapZ-ub}
  \card{A \cap Z_j} \leq \alpha |Z_j|
\end{equation}
for each $j \in [N]$.
Indeed, given some $j \in [N]$, for all $(i',I') \in \cI$ with $i' \leq j$, we have $V(\cH_{i',I'}) \cap Z_j = \emptyset$, since $\cH_{i',I'}$ canonically covers $R'_{i',I'}$ in $K(R'_{i',I'}) \cup K(R'_{i',I'},Z_{i_2'},\ldots,Z_{i_k'})$.
Now for all $(i',I') \in \cI$ with $i' > j$, we have $\card{V(\cH_{i',I'}) \cap Z_j} \leq |R_{i',I'}'|$, again because $\cH_{i,I}$ canonically covers $R'_{i',I'}$.
Therefore,
\begin{align*}
  \card{A \cap Z_j}
  \leq \sum_{(i',I') \prec (i,I)} \card{V(\cH_{i',I'}) \cap Z_j}
  \leq \sum_{(i',I') \in \cI \; : \; i' > j} \card{R'_{i',I'}}
  \leq \card{R_{j+1}},
\end{align*}
since the sets $\{R_{i',I'}': (i',I') \in \cI, i > j\}$ are disjoint subsets of $R_{j+1}$.
Finally, since $\card{R_{j+1}} \leq \alpha \card{Z_j}$, this implies \cref{eq:AcapZ-ub}.
In particular, by \cref{claim:1}, there is a collection $\cH_{i,I}$ of monochromatic copies of graphs from $\cF$ that canonically covers $R'_{i,I}$ in $K(R'_{i,I}) \cup K(R'_{i,I},Z_{i_2},\ldots,Z_{i_k})$, where $I = \{i_2, \ldots, i_k\}$, and such that $V(\cH_{i,I})$ is disjoint from $A$.

It remains to cover $\bigcup_{i=1}^N Z_i$. Let $A := \bigcup_{(i,I) \in \cI} V(\cH_{i,I})$ be the
set of vertices covered in the previous step. Note that, similarly as in \cref{eq:AcapZ-ub}, we have
$|A \cap Z_j| \leq \alpha |Z_j|$ for all $j \in [N]$. Therefore, by \cref{lem:robust},
the cylinder $\tilde Z_j$ obtained from $Z_j$ by removing all vertices in $A$ is
$(8\e,d/2)$-super-regular and $\e$-balanced for every $j \in [N]$. It follows from \cref{thm:blowup}
that, for every $j \in [N]$, there is a collection $\cH_j$ of at most $\Delta+3$ monochromatic
vertex-disjoint copies of graphs from $\cF$ contained in $Z_j$ covering $V(Z_j)$.

In total, the number of monochromatic copies we used to cover $V(G)$ is at most
 \begin{align*}
   N \cdot C_1 + N^k \cdot C_2 + N \cdot (\Delta+3)
   &\leq 2N^k C_2\\
   &\leq 2{r}^{rk^2} \cdot \exp^2\left(r^{16 \tilde K r \Delta^3} \right)\\
   &\leq \exp^2\left(r^{K r \Delta^3} \right).
 \end{align*}
 This concludes the proof of \cref{thm:main}.
\end{proof}

\section{Concluding Remarks}\label{sec:concluding-remarks}
We were able to prove that sequences of graphs with maximum degree $\Delta$ have finite $r$-colour tiling number for every $r \geq 3$, but our bound is super-exponential in $\Delta$.
Grinshpun and S\'ark\"ozy~\cite{Grinshpun2016} conjectured that it is possible to prove an upper bound which is essentially exponential in $\Delta$ (see \cref{conj:main}). The problem becomes somewhat easier when restricted to bipartite graphs. In fact, our proof gives a double exponential upper bound in $\Delta$ for $r$-colour tiling numbers of sequences of bipartite graph with maximum degree $\Delta$. Indeed, the factor $k$ in the recursive bound \cref{ineq:Cdj} can be dropped for bipartite graphs. It would be very interesting to confirm \cref{conj:main} for sequences of bipartite graphs.

Another interesting problem is to prove a version of \cref{thm:main} for other sequences of graphs. Given a sequence of graphs $\cF = \{F_i: i\in\mathbb{N}\}$ with $|F_i| = i$, for every $i\in
\mathbb{N}$, let $\rho_r(\cF) = \sup_{i \in \mathbb{N}} R_r(F_i)/i$. If $\rho_r(\cF)$ is finite, then we say
that $\mathcal{F}$ has linear $r$-colour Ramsey number. If $\cF$ is \emph{increasing}\footnote{That
  is, $F_i \subseteq F_{i+1}$, for every $i \in \bN$.}, then it follows from the pigeon-hole
principle that $\tau_r(\mathcal{F}) \geq \rho_r(\mathcal{F})$. Indeed, for each $n\in \mathbb{N}$,
every $r$-edge-coloured $K_n$ contains a monochromatic copy from $\cF$ of size at least $i = \lceil
n/\tau_r(\cF) \rceil$. In particular, since $\cF$ is increasing, there is a monochromatic copy of
$F_i$ in every $r$-edge colouring of $K_n$. This implies that $R_r(F_i) \leq \tau_r(\cF) \cdot i$, and
therefore $\rho_r(\mathcal{F}) \leq \tau_r(\mathcal{F})$.

Graham, R\"odl and Ruci\'nski~\cite{Graham2000} proved that there exists a sequence of bipartite graphs
$\cF= \{F_i:i\in\mathbb{N}\}$ with $\rho_2(\cF) \geq 2^{\Omega(\Delta)}$. Grinshpun and S\'ark\"ozy
observed that one can make this sequence increasing, thereby showing that $\tau_2(\cF) \geq
2^{\Omega(\Delta)}$ as well. Conlon, Fox and Sudakov~\cite{Conlon2012a} proved that for every sequence of
graphs with degree at most $\Delta$, we have $\rho_2(\mathcal{F}) \leq 2^{O(\Delta \log{\Delta})}$
while Grinshpun and S\'ark\"ozy~\cite{Grinshpun2016} proved that $\tau_2(\cF) \leq 2^{O(\Delta
  \log{\Delta})}$. For more colours, Fox and Sudakov~\cite{Fox2009} proved that for every sequence of
graphs with degree at most $\Delta$, we have $\rho_r(\mathcal{F}) \leq 2^{O_r(\Delta^2)}$, while our
main result shows that $\tau_r(\mathcal{F}) \leq \exp^3(O_r(\Delta^3))$.

With these results in mind, one can naturally ask if there exists a function
$f:\mathbb{R}\to\mathbb{R}$ such that for every sequence of graphs $\cF =
\{F_i:i\in\mathbb{N}\}$ we have $\tau_r(\cF) \leq f(\rho_r(\cF))$. That is, if it is possible to bound $\tau_r(\cF)$ in terms of $\rho_r(\cF)$. In particular, this would imply that sequences of graphs with linear Ramsey number have finite tiling number. However, the following example due to Alexey Pokrovskiy (personal communication) shows that $\tau_r(\cF)$ cannot be bounded
by $\rho_r(\cF)$ in general.
Let $S_i$ be a star with $i$ vertices and let $\mathcal{S} =
\{S_i:i\in\mathbb{N}\}$ be the family of stars. It follows readily from the pigeonhole principle that $R_r(S_i) \leq r(i-2) + 2$, for every $i \in \bN$, and thus $\rho_r(\mathcal{S}) \leq r$.
However, the following shows that $\tau_r(\mathcal{S}) = \infty$, for every $r \geq 2$.

\begin{example}\label{ex:stars}
  For all $r\geq 2$ and all sufficiently large $n$, we have $\tau_r(\mathcal{S},n) \geq r \cdot
  \log(n/8)$.
\end{example}

\begin{proof}
Let $ \tau = r\log(n/8)$ and colour $E(K_n)$ uniformly at random with $r$ colours. Given a
vertex $v \in [n]$ and a colour $c$, let $S_c(v)$ be the star centred at $v$ formed by all the
edges of colour $c$ incident on $v$. Note that there is a monochromatic $\mathcal{S}$-tiling of size at most $\tau$ if and only if there are distinct vertices $v_1, \ldots, v_\tau$ and colours $c_1, \ldots, c_\tau \in [r]$ such that $\bigcup_{i \in [\tau]} V(S_{c_i}(v_i)) = [n]$.

Fix distinct vertices $v_1, \ldots, v_\tau \in [n]$ and colours $c_1, \ldots, c_\tau \in [r]$. Let
$U$ be the random set $U = \bigcup\nolimits_{i \in [\tau]} V(S_{c_i}(v_i))$. Note that the events
$\{v \in U\}$, for $v\in[n]\setminus\{v_1,\ldots,v_{\tau}\}$, are independent and each has
probability $1 - {(1-1/r)}^{\tau}$. Therefore, using $ e^{-x/(1-x)} \leq 1 - x \leq e^x$ for all $x
\leq 1$, we get
\begin{align*}
  \Pro{U = [n]}
  &= {\left( 1- {\left(1-1/r\right)}^{\tau} \right)}^{n-\tau}\\
  &\leq \exp\left( -(n-\tau) {\left(1-1/r\right)}^{\tau} \right)\\
  &\leq \exp\left(-n{(1-1/r)}^{\tau+1}\right)\\
  &\leq \exp\left(-n\exp\left(-4\tau/r\right)\right)\\
  &\leq \exp\left(-\sqrt{n}\right).
\end{align*}

Taking a union bound over all choices of $v_1, \ldots, v_\tau$ and $c_1, \ldots, c_\tau$, we conclude that the probability that there is a monochromatic $\mathcal{S}$-tiling of size $\tau$ is at most
\[{(rn)}^{-\tau} \cdot e^{-\sqrt{n}} < 1\]
for all sufficiently large $n$.
Hence, there exists an $r$-colouring of $E(K_n)$ without a monochromatic $\mathcal{S}$-tiling of size at most $\tau$, finishing the proof.
\end{proof}

Lee~\cite{Lee2017} proved that graphs with bounded degeneracy\footnote{A graph $G$ is $d$-degenerate if there is an ordering of its vertices so that every $v \in V(G)$ is adjacent to at most $d$ vertices which come before $v$.} have linear Ramsey number.
\cref{ex:stars} shows however that it is not possible to extend this result to a tiling result. Nevertheless, it may be possible to allow unbounded degrees in this case.

\begin{question}\label{conj:bounded-degeneracy}
  Is there a function $\omega: \bN \to \infty$ with $\lim_{n \to \infty} \omega(n) = \infty$, such that the following is true for all integers $r,d \geq 2$?
  If $\cF= \{F_1, F_2, \ldots\}$ is a sequence of $d$-degenerate graphs with $v(F_n) = n$ and $\Delta (F_n) \leq \omega(n)$ for all $n \in \mathbb{N}$, then $\tau_r(\cF) < \infty$.
\end{question}

B\"ottcher, Kohayakawa and Taraz~\cite{Boettcher2015} proved an extension of the blow-up lemma to graphs $H$ of bounded arrangeability\footnote{A graph $G$ is called \emph{$a$-arrangeable} for some $a \in \bN$ if its vertices can be ordered in such a way that for every $v \in V(G)$, there are at most $a$ vertices to the left of $v$ that have some common neighbour with $v$ to the right of $v$.}
with $\Delta(H) \leq \sqrt{n}/\log(n)$.
Using their result, it is possible to prove the following strengthening of \cref{thm:main}.
\begin{thm}\label{thm:arrange}
	For all integers $r,a \geq 2$ and all sequences of $a$-arrangeable graphs $\cF = \{F_1, F_2,
  \ldots \}$ with $|F_n| = n$ and $\Delta (F_n) \leq \sqrt n/\log(n)$ for all $n \in \mathbb{N}$, we
  have $ \tau_r(\cF) < \infty$.
\end{thm}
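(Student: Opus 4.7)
The plan is to follow the proof of \cref{thm:main} essentially line-by-line, replacing the bounded-degree ingredients by their arrangeability analogues. An $a$-arrangeable graph has chromatic number bounded by a constant $\chi(a)$ depending only on $a$, so we may take $k := \chi(a)+1$ throughout in place of $k=\Delta+2$; the $k$-cylinders constructed in the proof will then still be wide enough to host any copy from $\cF$. Two external results need to be substituted. First, the blow-up lemma \cref{thm:blowup} is replaced by the B\"ottcher-Kohayakawa-Taraz extension for $a$-arrangeable graphs with $\Delta(H)\leq \sqrt{n}/\log n$, which is precisely the source of the degree restriction in the conclusion. Second, \cref{thm:bounded-ramsey}, which underlies the greedy covering \cref{prop:greedy}, is replaced by the corresponding linear multicolour Ramsey bound for $a$-arrangeable graphs of sub-polynomial maximum degree, which is available via the Conlon-Fox-Sudakov machinery.

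With these two substitutions, both the absorption lemma and the main theorem go through almost verbatim. The density-increment argument in \cref{lem:abs-onecolour} is structural and invokes the blow-up lemma only at the very end, in \cref{claim:cover-S}, where it is used to embed a graph from $\cF$ into a super-regular $k$-cylinder; the B\"ottcher-Kohayakawa-Taraz result delivers exactly such an embedding for $a$-arrangeable graphs, and so the entire recursion is preserved, with the hidden constants now depending on $a$ rather than $\Delta$. The iterative cylinder-extraction-and-absorption framework in the proof of \cref{thm:main} is likewise unaffected, and the termination argument via \cref{lem:indtr} now bounds the number of iterations by $R_r(K_{\chi(a)+1})$. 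The only minor wrinkle is the final tiling step, where \cref{thm:blowup}(i) is used to partition each cylinder into a bounded number of copies from $\cF$: this can be recovered by slicing each cylinder into $O_a(1)$ roughly equal sub-cylinders via the slicing lemma and applying the B-K-T blow-up lemma separately to each, thereby replacing the Hajnal-Szemer\'edi component of \cref{thm:blowup}(i).

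The main obstacle I expect is the bookkeeping on degrees: every time we invoke B-K-T on a cylinder of total size $m$ to embed a copy of $F_m$, we need $\Delta(F_m)\leq \sqrt{m}/\log m$, which is granted by the hypothesis on $\cF$, but we must also check that the cylinder sizes produced by the iterated applications of \cref{lemma:regcyl,lem:regcyl-partite} remain at least a polynomial fraction of $n$, so that $m$ is large enough for the $\sqrt{m}/\log m$ condition to dominate the fixed (though small) super-regularity parameters. Once $n$ is taken sufficiently large in terms of $r$ and $a$, this verification is routine, and the argument then delivers the required finite bound on $\tau_r(\cF)$.
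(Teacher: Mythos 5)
Your overall plan — swap \cref{thm:blowup} for the B\"ottcher--Kohayakawa--Taraz blow-up lemma and swap \cref{thm:bounded-ramsey} for an $a$-arrangeable analogue — matches the paper's two substitutions, and you correctly identify that the density-increment argument in \cref{lem:abs-onecolour} is otherwise structural. However, your fix for the final tiling step does not work, and this is the one place where the paper introduces a genuine new idea.

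In \cref{thm:blowup}(i), Hajnal--Szemer\'edi is not used to cut the \emph{cylinder} into pieces; it is applied to the \emph{target graph} $F_m$ to produce an equitable $(\Delta+1)$-colouring, so that $F_m$ has a balanced $k$-partition matching the balanced $k$-cylinder it is being embedded into. Slicing the cylinder with \cref{lem:slicing} into $O_a(1)$ equal sub-cylinders leaves you exactly where you started: each sub-cylinder is again balanced, but an $a$-arrangeable $F_m$ need not admit a balanced $(a+2)$-colouring (arrangeability bounds the chromatic number, not the sizes of the colour classes, and there is no Hajnal--Szemer\'edi theorem in this setting since the maximum degree can be $\sqrt n/\log n$). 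So the B-K-T blow-up lemma still has nothing balanced to embed. The paper circumvents this by passing to an auxiliary sequence $\tilde\cF$: fix any proper $(a+2)$-colouring of each $F_j$, let $\tilde F_{jk}$ be the disjoint union of $k=a+2$ copies of $F_j$ with the colour classes cyclically rotated so that $\tilde F_{jk}$ has a balanced $k$-partition by construction, pad with isolated vertices for the other residues, and observe that a $\tilde\cF$-tiling of size $t$ yields an $\cF$-tiling of size at most $(2k-1)t$. This reduction to balanced $k$-partite targets is the missing ingredient in your argument; without it, the last step of the proof fails.
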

The proof is almost identical, with the following two differences. First, instead of
\cref{thm:blowup}, we need to use the blow-up lemma mentioned above together with the following
alternative to Hajnal's and Szemer\'edi's theorem which guarantees balanced partitions of graphs
with small degree. Given a sequence $\cF = \{F_1, F_2, \ldots\}$ of $a$-arrangeable graphs with
$\Delta(F_n) \leq \sqrt n/\log(n)$ for every $n \in \mathbb{N}$, we define another sequence of
graphs $\tilde \cF = \{\tilde F_1,\tilde F_2, \dots \}$ as follows. Since every $a$-arrangeable
graph is $(a+2)$-colourable, we can fix a partition of $V(F_n) = V_1(F_n) \cup \ldots \cup
V_{k}(F_n)$ into independent sets, where $k = a+2$. Then, for every $j \in \bN$, we define
$\tilde{F_{jk}}$ to be the disjoint union of $k$ copies of $F_j$. Note that each $\tilde F_{jk}$ has
a $k$-partition into parts of equal sizes (by rotating each copy around). Finally, for each $j \in
\bN \cup \{0\}$ and every $i \in [k-1]$, we define $\tilde F_{jk+i}$ to be the disjoint union of
$\tilde F_{jk}$ and $i$ isolated vertices (here $\tilde F_0$ is the empty graph). Observe that all
$\tilde{F_n}$ have $k$-partitions into parts of almost equal sizes. Furthermore, every
$\tilde{\cF}$-tiling $\cT$ corresponds to an $\cF$-tiling $\tilde \cT$ of size at most $(2k-1)
|\cT|$. Therefore, it suffices to prove \cref{thm:arrange} for graphs with balanced
$(a+2)$-partitions.

Second, we need to replace \cref{thm:bounded-ramsey} with a similar theorem for $a$-arrangeable
graphs $G$ with $\Delta(G) \leq \sqrt n/\log(n)$, where $n = v(G)$. For two colours, such a theorem
was proved by Chen and Schelp~\cite{Chen1993}. For more than two colours, this was (to the best of the
author's knowledge) never explicitly stated, but is easy to obtain using modern tools (for example,
by applying the above mentioned blow-up lemma for $a$-arrangeable graphs).

\section{Acknowledgement}
\label{sec:acknowledgement}

The authors would like to tank the organisers of the workshop Extremal and Structural Combinatorics, held at IMPA in Rio de Janeiro, where this work began, and to thank Rob Morris for reading a previous version of this paper.

\providecommand{\bysame}{\leavevmode\hbox to3em{\hrulefill}\thinspace}
\providecommand{\MR}{\relax\ifhmode\unskip\space\fi MR }
% \MRhref is called by the amsart/book/proc definition of \MR.
\providecommand{\MRhref}[2]{%
  \href{http://www.ams.org/mathscinet-getitem?mr=#1}{#2}
}
\providecommand{\href}[2]{#2}

\appendix
\section{Appendix}\label{appendix}

In this appendix, we shall prove the lemmas stated in \cref{sec:reg} for which we could not find a proof in the literature. Their proofs however are standard and not difficult.
\begin{proof}[Proof of \cref{lem:robust}]
  Let $U_i = (V_i \setminus X_i) \cup Y_i$ for $ i \in \{1,2\}$. We will show that $ (U_1,U_2) $ is $(8\e,d - 8\e,\delta/2)$-super-regular.
   Let now $Z_i \subseteq U_i$ with $|Z_i| \geq 8 \e |U_i|$, and let $Z_i' = Z_i \setminus Y_i$ and $Z_i'' = Z_i \cap Y_i$ for $i \in \{1,2\}$.
  Note that we have
  \begin{align}
    \label{eq:help1}
    |Z_i| &\geq 8\e|U_i| \geq \e |V_i|,\\
    \label{eq:help2}
    |Z_i''| &\leq |Y_i| \leq \e^2 |V_i| \overset{\eqref{eq:help1}}{\leq} \e |Z_i| \text{ and}\\
    \label{eq:help3}
    |Z_i'| &= |Z_i| - |Z_i''| \overset{\eqref{eq:help2}}{\geq} (1-\e)|Z_i|
  \end{align}
  for both $i \in \{1,2\}$.
  We therefore have
  \[e(Z_1,Z_2) \leq  e(Z_1',Z_2') + e(Z_1'',Z_2) + e(Z_1,Z_2'') \overset{\eqref{eq:help2}}{\leq} e(Z_1',Z_2') + 2\e|Z_1||Z_2|\] and thus
  \[d(Z_1,Z_2) \leq d(Z_1',Z_2') + 2\e.\]
  On the other hand, we have
  \begin{align*}
    d(Z_1,Z_2)
    & = \frac{e(Z_1,Z_2)}{|Z_1||Z_2|}
      \geq \frac{e(Z_1',Z_2')}{|Z_1'||Z_2'|} \cdot \frac{|Z_1'||Z_2'|}{|Z_1||Z_2|} \\
    & \overset{\eqref{eq:help3}}{\geq} d(Z'_1,Z'_2){(1-\e)}^2
      \geq d(Z_1',Z_2') - 2\e
  \end{align*}
  and hence $d(Z_1,Z_2) = d(Z_1',Z_2') \pm 2\e$.
  Furthermore, by $\e$-regularity of $(V_1,V_2)$, we have $d(Z_1',Z_2') = d(V_1, V_2) \pm \e$ and we conclude
  \[d(Z_1,Z_2) = d(V_1, V_2) \pm 3\e.\]
  This holds in particular for $Z_1 = U_1$ and $Z_2 = U_2$ and therefore the pair $(U_1,U_2)$ is $(8\e,d-8\e,0)$-super-regular.
  Let $u_1 \in U_1$ now. By assumption, we have $\deg(u_1,V_2) \geq \delta |V_2|$ and therefore
  \begin{align*}
    \deg(u_1,U_2) \geq \deg(u_1,V_2 \setminus X_2) &\geq (\delta - \e^2) |V_2| \\
    &\geq (\delta - \e^2)|U_2| \geq \delta/2 \cdot |U_2|.
  \end{align*}
  A similar statement is true for every $u_2 \in U_2$ finishing the proof.
\end{proof}

The following consequence of the slicing lemma will be useful when we prove \cref{lemma:regcyl,lem:regcyl-partite}.
\begin{lemma}\label{lem:super-reg}
Let $k$ be a positive integer and $d,\e >0$ with $\e \leq 1/(2k)$. If $Z = (V_1, \ldots, V_k)$ is an $\e$-regular $k$-cylinder and $d(V_i,V_j) \geq d$ for all $1 \leq i < j \leq k$, then there is some $ \gamma \leq k\e$ and sets $\tilde V_1 \subseteq V_1, \ldots, \tilde V_k \subseteq V_k$ with
$|\tilde{V}_i| = \lceil (1-\gamma) |V_i| \rceil$ for all $ i \in [k]$
so that the $k$-cylinder $\tilde Z = (\tilde V_1, \ldots, \tilde V_k)$ is $(2\e,d - k\e)$-super-regular.
\end{lemma}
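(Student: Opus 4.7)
My plan is to obtain $\tilde Z$ by removing, from each part, the vertices of low degree into the other parts, and then trimming down slightly more so that all removed fractions agree.

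First, for each ordered pair $i \neq j$, call a vertex $v \in V_i$ \emph{$(i,j)$-bad} if $\deg(v, V_j) < (d - \eps)|V_j|$, and let $B_i^{(j)} \subseteq V_i$ be the set of such vertices. The standard argument using $\eps$-regularity of $(V_i, V_j)$ and $d(V_i, V_j) \geq d$ forces $|B_i^{(j)}| \leq \eps |V_i|$: otherwise $B_i^{(j)}$ would be a large subset of $V_i$ on which the density drops by more than $\eps$, contradicting regularity. Setting $B_i := \bigcup_{j \neq i} B_i^{(j)}$, we obtain $|B_i| \leq (k-1)\eps |V_i|$.

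Next, choose $\gamma = (k-1)\eps \leq k\eps$. Since $|V_i \setminus B_i| \geq (1-\gamma)|V_i|$, and the right-hand side of $|\tilde V_i| = \lceil(1-\gamma)|V_i|\rceil$ is an integer at most $|V_i \setminus B_i|$, I can pick $\tilde V_i$ as any subset of $V_i \setminus B_i$ of that exact size. Note that $|V_i| - |\tilde V_i| \leq \gamma |V_i| \leq (k-1)\eps |V_i|$, and $|\tilde V_i| \geq (1-\gamma)|V_i| \geq |V_i|/2$ because $\eps \leq 1/(2k)$.

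It remains to check that each pair $(\tilde V_i, \tilde V_j)$ is $(2\eps, d-k\eps)$-super-regular. For regularity and density I invoke the slicing lemma (\cref{lem:slicing}) with $\beta \geq 1/2$: it gives $\eps' = 2\eps$ and density $d' \geq d - \eps \geq d - k\eps$. For the minimum degree, note that any $v \in \tilde V_i \subseteq V_i \setminus B_i$ satisfies $\deg(v, V_j) \geq (d - \eps)|V_j|$; removing the $|V_j \setminus \tilde V_j| \leq (k-1)\eps |V_j|$ excess vertices yields
\[
\deg(v, \tilde V_j) \geq (d - \eps)|V_j| - (k-1)\eps |V_j| = (d - k\eps)|V_j| \geq (d - k\eps)|\tilde V_j|.
\]
Combining both estimates finishes the verification. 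The proof is essentially bookkeeping; the only real step is controlling the bad-vertex set, and the arithmetic is arranged so that the $(k-1)\eps$ bounds from regularity and from trimming add up to exactly $k\eps$.
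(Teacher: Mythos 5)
Your proof is correct and follows essentially the same route as the paper's: remove the low-degree vertices $A_{i,j}=\{v\in V_i:\deg(v,V_j)<(d-\e)|V_j|\}$, bound each by $\e|V_i|$ using regularity, set $\gamma=(k-1)\e$, trim to exact size, and then appeal to the slicing lemma for regularity/density while verifying the minimum degree directly. The only cosmetic difference is that the paper pads $A_i$ up to size $\lfloor(k-1)\e|V_i|\rfloor$ and then takes the complement, whereas you select a suitable subset of $V_i\setminus B_i$ directly; these are the same construction.
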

\begin{proof}
For $i \not = j \in [k]$, let $A_{i,j} := \{v \in V_i : \deg(v,V_j) < (d-\e)|V_j|\}$. By definition of $\e$-regularity, we have $\card{A_{i,j}} < \e |V_i|$ for every $i \not = j \in [k]$. For each $i \in [k]$, let $A_i=\bigcup_{j \in [k]\setminus\{i\}} A_{i,j}$.
Clearly $|A_i| < (k-1)\e |V_i|$ for every $i \in [k]$, so we can add arbitrary vertices from $V_i \setminus A_i$ to $A_i$ until $|A_i| = \lfloor (k-1) \e |V_i| \rfloor$ for every $i \in [k]$.
Let now $ \tilde V_i = V_i \setminus \tilde{A}_i$ for every $i \in [k]$ and let $\tilde Z = (\tilde V_1,\ldots,\tilde V_k)$. Observe that $|\tilde{V}_i| = \lceil (1 - \gamma)|V_i| \rceil$ for all $i \in [k]$, where $\gamma = (k-1)\e$.
It follows from \cref{lem:slicing} and definition of $A_i$ that $\tilde Z$ is $(2\e, d-\e, d-k\e)$-super-regular.
\end{proof}

Given $k$ disjoint sets $V_1,\ldots,V_k$, we call a cylinder $(U_1, \ldots, U_k)$ \emph{relatively balanced} (w.r.t.\ $(V_1, \ldots, V_k)$) if there exists some $\gamma > 0 $ so that $U_i \subset V_i$ with $|U_i| = \lfloor \gamma |V_i| \rfloor$ for every $i \in [k]$.
We say that a partition $\cK$ of $V_1\times \cdots \times V_k$ is \emph{cylindrical} if each partition class is of the form $W_1 \times \cdots \times W_k$ (which we associate with the $k$-cylinder $Z=(W_1, \ldots, W_k)$) with $W_j \subseteq V_j$ for every $j \in [k]$.
Finally, we say that $\cK = \{Z_1, \ldots, Z_N\}$ is $\e$-regular if
\begin{enumerate}[(i)]
\item $\cK$ is a cylindrical partition of $V_1\times \cdots \times V_k$,
\item each $Z_i$, $i \in [k]$, is a relatively balanced w.r.t. $(V_1,\ldots,V_k)$, and
\item all but $\e |V_1|\cdots |V_k|$ of the $k$-tuples $(v_1,\ldots,v_k) \in V_1 \times \cdots \times V_k$ are in $\e$-regular cylinders.
\end{enumerate}
For technical reasons, we will allow some of the sets $V_1,\ldots,V_k$ to be empty. In this case $(A,\emptyset)$ is considered $\e$-regular for every set $A$ and $\e >0$.
If $G$ is an $r$-edge-coloured graph and $i \in [r]$, we say that a cylinder $\cK$ is $\e$-regular in colour $i$ if is $\e$-regular in $G_i$ (the graph on $V(G)$ with all edges of colour $i$).

In~\cite{Conlon2012}, Conlon and Fox used the weak regularity lemma of Duke, Lefmann and
R\"odl~\cite{Duke1995} to find a reasonably large balanced $k$-cylinder in a $k$-partite graph. In
order to prove a coloured version of Conlon and Fox's result, we will need the following coloured
version of the weak regularity lemma of Duke, Lefmann and R\"odl. Note that, like the weak
regularity lemma of Frieze and Kannan~\cite{FrKa96}, we get an exponential bound on the number of
cylinders, in contrast to the much worse tower-type bound required by Szemer\'edi's regularity lemma
(see~\cite{FoLo17}).

\begin{thm}[Duke--Lefmann--R\"odl~\cite{Duke1995}]\label{thm:weakreg}
Let $ 0 < \e < 1/2$, $k, r \in \bN$ and let $ \beta = \e^{rk^2 \e^{-5}}$. Let $G$ be an $r$-edge-coloured $k$-partite graph with parts $V_1, \ldots,V_k$.
Then there exist some $N \leq \beta^{-k}$, sets $R_1 \subseteq V_1, \ldots, R_k \subseteq V_k$ with $\card{R_i} \leq \beta^{-1}$ and a partition
$\cK = \{Z_1, \ldots, Z_N\}$ of $(V_1 \setminus R_1) \times \cdots \times (V_k \setminus R_k)$
so that $\cK$ is $\e$-regular in every colour and $V_i(Z_j) \geq \lfloor \beta |V_i| \rfloor$ for every $i \in [k]$ and $j \in [N]$.
\end{thm}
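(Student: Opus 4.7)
The plan is to reduce this coloured weak regularity lemma to the classical one-colour Duke--Lefmann--Rödl lemma by iterating it once per colour and tracking how regularity survives refinement via the slicing lemma (\cref{lem:slicing}). Fix an auxiliary parameter $\e_0 \leq \e$ to be optimised later, and let $\beta_0 = \e_0^{k^2 \e_0^{-5}}$ denote the DLR constant for $\e_0$.

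Starting from the trivial cylindrical partition $\{(V_1, \ldots, V_k)\}$, I iterate the following for $i = 1, \ldots, r$. Inside each current cell $Z$, apply the one-colour DLR lemma with regularity parameter $\e_0$ to the colour-$i$ subgraph $G_i[Z]$; this yields a cylindrical refinement of $Z$ that is $\e_0$-regular in colour $i$, with local exceptional sets of size at most $\beta_0^{-1}$ per direction relative to $Z$. Merging these local exceptional sets into the running $R_j$ and taking the union of the refinements as the new partition, one obtains a cylindrical partition that is $\e_0$-regular in colour $i$ and whose cells are sub-cylinders of the previous cells with each side shrunk by a factor at least $\beta_0$. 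After $r$ rounds, the resulting partition $\cK$ has $N \leq \beta_0^{-rk}$ cells of side-length $\geq \beta_0^r |V_j|$, with exceptional sets of size $|R_j| \leq r \cdot \beta_0^{-rk}$ after summing the local contributions.

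The subtle point is regularity preservation: a cell of $\cK$ is, for each colour $i$, a sub-cylinder of a cell that was $\e_0$-regular in colour $i$ after step $i$, subsequently sliced by a total factor of at least $\beta_0^{r-i}$ on each side; by \cref{lem:slicing} it is therefore $(\e_0/\beta_0^{r-i})$-regular in colour $i$. To force $\e$-regularity in every colour simultaneously, I need $\e_0 \leq \e \cdot \beta_0^{r-1}$. The main obstacle is calibrating $\e_0$ so that this regularity constraint and the size bound $\beta := \beta_0^r \geq \e^{rk^2/\e^5}$ are simultaneously satisfied, given that $\beta_0 = \e_0^{k^2/\e_0^5}$ depends super-polynomially on $\e_0$. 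These two constraints pull in opposite directions — smaller $\e_0$ improves regularity survival but shrinks cells — so a careful choice, e.g.\ $\e_0 = \e^{c(r,k)}$ for a suitable exponent, is required, together with a geometric-sum estimate for the exceptional sets. This parameter calibration (and a final step of moving any remaining cells with a side below $\lfloor \beta |V_j| \rfloor$ into $R_j$ and checking the $\e$-regular-cell mass condition per colour) is the technical heart of the argument, and is presumably what is meant by the authors' remark that the proof is ``very similar'' to the uncoloured case.
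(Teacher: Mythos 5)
Your black-box iteration of the one-colour DLR lemma cannot work, because the slicing lemma never applies in the regime you need. \cref{lem:slicing} controls regularity only when the slicing fraction $\beta$ exceeds the regularity parameter $\e$; however, a single DLR refinement shrinks cell sides by a factor $\beta_0=\e_0^{k^2\e_0^{-5}}$, which is vastly \emph{smaller} than $\e_0$ for every $\e_0\in(0,1)$. Your constraint for keeping colour-$i$ regularity through the remaining rounds is $\e_0/\beta_0^{\,r-1}\le\e$, which in particular requires $\beta_0^{\,r-1}>\e_0$; but $\beta_0^{\,r-1}=\e_0^{(r-1)k^2\e_0^{-5}}<\e_0$ always, so the slicing lemma never becomes available. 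Put differently, after a single DLR refinement the sub-cells lie strictly below the regularity scale of the parent cell, and regularity in previously processed colours is simply lost — not merely degraded. The two constraints you flag as ``pulling in opposite directions'' are not reconcilable by tuning $\e_0$: one would need $\e_0\ge\e$ for the size bound and $\e_0>\big((r-1)k^2\big)^{1/5}\ge1$ for the slicing lemma to be invocable at all, which is impossible.

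The paper's route is different: it does not iterate the \emph{statement} of the one-colour weak regularity lemma but instead adapts its \emph{proof}. The DLR argument is an index-increment iteration over cylindrical partitions; the coloured version runs the same iteration while tracking the index (mean-square density) in every colour at once, refining whenever the current partition is not $\e$-regular in some colour. Each such refinement raises the index in that colour by a fixed amount, and since the index in each colour is bounded, the process terminates after roughly $r$ times as many rounds as in the one-colour case — this is exactly where the factor $r$ in the exponent of $\beta$ comes from, and it is also why the sets $V_1,\ldots,V_k$ need not be of equal size. Crucially, the regularity parameter $\e$ is the same at every round, and the argument never requires the cells of a later round to inherit regularity from an earlier round, so the slicing obstruction simply does not arise.
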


Although the original statement of Duke, Lefmann and R\"odl~\cite[Proposition~2.1]{Duke1995} does not involve the colouring and assume that sets $V_1,\ldots,V_k$ have the same size, their proof can be easily adapted to prove~\cref{thm:weakreg}.

We are now ready to prove \cref{lemma:regcyl,lem:regcyl-partite}.
\begin{proof}[Proof of \cref{lemma:regcyl}]
Let $k,r \geq 2$, $0 < \e < 1/(rk)$ and $ \gamma = \e^{r^{8rk}\e^{-5}}$. Let $n \geq 1/\gamma$ and suppose we are given an $r$-edge coloured $K_n$.
Let $\tilde{k} = r^{rk}$ and let $V_1, \ldots, V_{\tilde{k}} \subseteq [n]$ be disjoint sets of size $\lfloor n/\tilde{k} \rfloor$ and let $G$ be the $\tilde{k}$-partite subgraph of $K_n$ induced by $V_1, \ldots, V_{\tilde{k}}$ (inheriting the colouring).
Let $\tilde{\e} = \e/2$ and $\beta = {\tilde{\e}}^{r^{2rk+1} {\tilde{\e}}^{-5}}$.
We apply \cref{thm:weakreg} to get some $N \leq \beta^{-\tilde k}$, sets $R_1 \subseteq V_1,\ldots, R_{\tilde k} \subseteq V_{\tilde k}$ each of which of size at most $\beta^{-1}$ and a partition $\cK = \{Z_1, \ldots,Z_N\}$ of
$(V_1 \setminus R_i) \times \cdots \times (V_{\tilde k} \setminus R_{\tilde k})$
which is $\tilde{\e}$-regular in every colour, and with $V_i(Z_j) \geq \lfloor \beta |V_i| \rfloor \geq 2 \gamma n$ for every $i \in [\tilde k]$ and $j \in [N]$.
Note that one of the cylinders (say $Z_1$) must be $\tilde{\e}$-regular in every colour and, since $(V_1, \ldots, V_k)$ is balanced, so is $Z_1$.
We consider now the complete graph with vertex-set $\{V_1(Z_1), \ldots, V_{\tilde{k}}(Z_1)\}$ and colour every edge $V_i(Z_1)V_j(Z_1)$, $1 \leq i < j \leq \tilde{k}$, with a colour $c \in [r]$ so that the density of the pair $(V_i(Z_1),V_j(Z_1))$ in colour $c$ is at least $1/r$.
By Ramsey's theorem \cite{Ramsey1929,Erdoes1935}, there is a colour, say 1, and $k$ parts (say $V_1(Z_1), \ldots, V_k(Z_1)$) so that the cylinder $(V_1(Z_1), \ldots, V_k(Z_1))$ is $(\tilde{\e},1/r,0)$-super-regular in colour $1$.
By \cref{lem:super-reg}, there is an $(\e,1/(2r))$-super-regular balanced subcylinder $\tilde Z_1$ with parts of size at least $\gamma n$.
\end{proof}

\begin{proof}[Proof of \cref{lem:regcyl-partite}]
Let $k \geq 2$, and let $d,\e >0$ with $2k\e \leq d \leq 1$. Let $ \gamma = \e^{k^2\e^{-12}}$ and let $G$ be a $k$-partite graph with parts $V_1, \ldots, V_k$.
Let $\tilde{\e} = \e/4$ and $\beta = \tilde{\e}^{k^2 \tilde{\e}^{-5}}$.
We may assume that $|V_i| \geq 1/\gamma$ for every $i \in [k]$ (otherwise we set $U_i := \emptyset$ for all $i \in [k]$ with $|V_i| < 1/\gamma$). In particular, we have $|V_i| \geq k/(\tilde \e \beta)$ for all $i \in [k]$.

We apply \cref{thm:weakreg} (with $r = 1$) to get some $N \leq \beta^{-k}$, sets $R_1 \subseteq V_1,\ldots, R_k \subseteq V_k$, each of which of size at most $\beta^{-1}$, and an $\tilde{\e}$-regular partition
$\cK = \{Z_1, \ldots,Z_N\}$ of $(V_1 \setminus R_1) \times \cdots \times (V_k \setminus R_k)$ with $V_i(Z_j) \geq \lfloor \beta |V_i| \rfloor$ for every $i \in [k]$ and $j \in [N]$.

Note that the number of cliques of size $k$ incident to $ R = R_1 \cup \ldots \cup R_k$ is at most
\[
\sum_{i=1}^k \beta^{-1} \prod_{j \in [k] \setminus \{i\}} |V_j| \leq \tilde{\e} |V_1| \cdots |V_k|.
\]
Furthermore, since $\cK$ is $\tilde{\e}$-regular, there are at most $\tilde{\e} |V_1| \cdots |V_k|$ cliques of size $k$ in $G$ that belong to a cylinder of $\cK$ that is not $\e$-regular. Suppose that each cylinder $Z \in \cK$ has at most $(d-2\tilde{\e})|V_1(Z)| \cdots |V_k(Z)|$ cliques of size $k$. Then the number of $k$-cliques in $G$ is at most
\begin{align*}
   \tilde{\e} |V_1| \cdots |V_k| + \sum_{Z \in \cK} (d-2\tilde{\e}) |V_1(Z)| \cdots |V_k(Z)|
  \leq (d-\tilde{\e})|V_1| \cdots |V_k|,
\end{align*}
which contradicts our hypothesis over $G$. Therefore, there is a cylinder $\tilde Z$ in $\cK$ that contains at least $(d-2\tilde{\e})|V_1(\tilde Z)| \cdots |V_k(\tilde Z)|$ cliques of size $k$.
In particular, $\tilde Z$ is $(\tilde{\e},d-2\tilde\e,0)$-super-regular and relatively balanced with parts of size at least $\lfloor \beta |V_i| \rfloor$.
Finally, we apply \cref{lem:super-reg} (and possibly delete a single vertex from some parts) to get a relatively balanced $(\e,d-(k+2)\tilde{\e})$-super-regular $k$-cylinder $Z$ with parts of size at least $\tfrac{\beta}{2} |V_i| \geq \gamma |V_i|$.
This completes the proof since $(k+2)\tilde{\e} \leq k\e \leq d/2$.
\end{proof}
\end{document}